\definecolor{blue}{RGB}{0,0,255} 
\newtheorem{algorithm}{Algorithm}
\newtheorem{theorem}{Theorem}
\newtheorem{lemma}{Lemma}
\newtheorem{remark}{Remark}
\newtheorem{assumption}{Assumption}
\newtheorem{definition}{Definition}
\newtheorem{example}{Example}
\def\ml{\mathcal L}
\newcommand{\mor}[1]{{\color{black}#1}}
\newcommand{\zal}[1]{{\color{black}#1}}
\newcommand{\af}[1]{{\color{black}#1}}
\newcommand{\zizi}[1]{{\color{black}#1}}
\newcommand{\mort}[1]{{\color{black}#1}}
\newcommand{\afj}[1]{{\color{black}#1}}
\newcommand{\mb}[1]{{\color{black}#1}}
\newcommand{\mbrs}[1]{{\color{black}#1}}
\newcommand{\z}[1]{{\color{black}#1}}
\newcommand{\mrt}[1]{{\color{black}#1}}
\newcommand{\az}[1]{{\color{black}#1}}
\title{\LARGE \bf
Accelerated Primal-dual Scheme for a Class of Stochastic Nonconvex-concave Saddle Point Problems
}
\author{
  \begin{tabular}{c}
    Morteza Boroun\textsuperscript{*} \qquad Zeinab Alizadeh \textsuperscript{*} \qquad Afrooz Jalilzadeh\footnote{Department of Systems and Industrial Engineering, The University of Arizona, Tucson, AZ, USA.\\
     \texttt{\{morteza, zalizadeh, afrooz\}@arizona.edu}}
  \end{tabular}
}
\date{}
\begin{document}

\maketitle
\thispagestyle{empty}
\pagestyle{empty}

\begin{abstract}
    \normalsize

Stochastic nonconvex-concave min-max saddle point problems appear in many machine learning and control problems including distributionally robust optimization, generative adversarial networks, and adversarial learning. In this paper, we consider a class of nonconvex saddle point problems where the objective function
satisfies the Polyak-Łojasiewicz condition with respect to the minimization variable and it is concave with respect to the maximization variable. The existing methods for solving nonconvex-concave saddle point problems often suffer from slow convergence and/or contain multiple loops. Our main contribution lies in proposing a novel single-loop accelerated primal-dual algorithm with new convergence rate results appearing for the first time in the literature, to the best of our knowledge. \mrt{In particular, in the stochastic regime, we demonstrate a convergence rate of $\mathcal O(\epsilon^{-4})$ to find an $\epsilon$-gap solution which can be improved to $\mathcal O(\epsilon^{-2})$ in deterministic setting.}
\end{abstract}

\section{Introduction}
\label{sec:intro}
In this paper, we consider the following min-max saddle point (SP) game:
\begin{align}\label{main}
& \min_{x\in \mathcal X} \max_{y\in \mathcal Y} {\Phi(x,y)}\triangleq \ml(x,y)-h(y),
\end{align}
where $\mathcal X=\mathbb R^n$, $\mathcal Y=\mathbb R^m$, $\ml(x,y)=\mathbb E[\ml(x,y;\xi)]$, $\xi$ is a random vector, $\mathcal L(\cdot,y)$ is potentially nonconvex for any $y\in \mathcal Y$ and satisfies Polyak-Łojasiewicz (PL) condition (see Definition \ref{def PL}), $\ml(x,\cdot)$ is concave for any $x\in \mathcal X$ and $h(\cdot)$ is convex and possibly nonsmooth. 
Our goal is to develop an algorithm to find a first order stationary point of this SP problem.

Recent emerging applications
in machine learning and control have further stimulated a surge of interest in these problems. Examples that can be formulated as \eqref{main} include generative adversarial
networks (GANs) \cite{goodfellow2016deep},  fair  classification \cite{nouiehed2019solving}, communications \cite{akhtar2021conservative,bedi2019asynchronous}, and wireless system \cite{chen2011convergence,feijer2009krasovskii}. 
Convex-concave saddle point problems have been extensively studied in the literature \cite{chambolle2016ergodic,hamedani2021primal}. However, recent applications in machine learning and control may involve nonconvexity. One class of nonconvex-concave min-max problems is when the objective function satisfies PL condition that we aim to study in this paper. Next, we provide two examples that can be formulated as problem \eqref{main} and satisfies PL condition. 
\begin{example}[Generative adversarial imitation learning]\label{ex2}\emph{
One practical example of PL-game is generative adversarial imitation learning of linear quadratic regulators (LQR). Imitation learning techniques aim to mimic human behavior by observing
 an expert demonstrating a given task  \cite{hussein2017imitation}. Generative adversarial imitation learning (GAIL) is studied in  \cite{ho2016generative} which solves imitation learning via min-max optimization. 
Let $K$ represents the choice of the policy, $K_E$ represents the expert policy, and the cost parameter and the expected cumulative cost for a given policy $K$ are denoted by $\theta=(Q,R)$ and $C(K,\theta)$, respectively. The problem of GAIL for LQR can be formulated \cite{cai2019global} as $
\min_{K} \max_{\theta\in \Theta} m(K,\theta),$
where $m(K,\theta)=C(K,\theta)-C(K_E,\theta)$, $Q\in \mathbb R^{d\times d }$, $R\in \mathbb R^{k\times k}$, and $\Theta\triangleq \{(Q,R)\mid \alpha_Q I \preceq Q \preceq \beta_Q I, \alpha_R I \preceq R \preceq \beta_R I\}$. 
It is known that $m$ satisfies PL condition in $K$ \cite{nouiehed2019solving}. This problem is a special case of \eqref{main}, for $h(\theta)=\mathbb I_{\Theta}(\theta)$, where $\mathbb I_{\Theta}$ denotes the indicator function of set $\Theta$.}
\end{example}
\begin{example}[Distributionally robust optimization]\label{ex1}\emph{
 Define $\ell_i(x)=\ell(x,\xi_i)$, where $\ell:\mathcal X\times \Omega\to\mathbb R$ is a loss function possibly nonconvex and $\Omega=\{\xi_1,\hdots,\xi_n\}$. Distributionally robust optimization (DRO) studies worse case performance under uncertainty to find solutions with some specific confidence level \cite{namkoong2016stochastic}. DRO can be formulated as $\min_{x\in \mathcal X}\max_{y\in  Y} \sum_{i=1}^n y_i \ell_i(x),$
where $\mathcal Y$ represents the uncertainty set, e.g., $ Y=\{y\in \mathbb R^m_{+}\mid y\geq \delta/n, \ V(y,\tfrac{1}{n}\mathbf 1_n)\leq \rho\}$ is an uncertainty set considered in \cite{namkoong2016stochastic} and $V (Q,P)$ denotes the divergence measure between two sets of probability measures $Q$ and $P$. As it has been shown in \cite{guo2020fast}, DRO for deep learning with ReLU activation function satisfies PL condition in an $\epsilon$-neighborhood around a random initialized point. This problem is a special case of \eqref{main}, for $h(y)=\mathbb I_{Y}(y)$.}
\end{example}
One natural way to solve problem \eqref{main} is directly with the idea of taking two simultaneous or sequential steps for reducing the objective function $\Phi(\cdot,y)$ for a given $y$ and increasing the objective function $\Phi(x,\cdot)$ for a given $x$. One of the most famous algorithms for solving such problem is known as gradient descent-ascent (GDA) \cite{nedic2009subgradient}.
It has been discovered that such a naive approach leads to poor performance and may even diverge for  simple problems.
One way to resolve this issue is by adding a momentum in terms of the gradient of the objective function. Although this approach leads to an optimal convergence rate result \cite{hamedani2021primal,zhao2021accelerated}, it may not be directly applicable in nonconvex-concave setting. Therefore, we aim to to develop a novel primal-dual algorithm with acceleration in the primal update as well as a new momentum in the dual update. 
\subsection{Related Works}
{\bf Nonconvex-concave SP problem.} Various algorithms have been proposed for solving nonconvex-concave SP problems due to their applicability in many modern machine learning problems. The existing methods can be categorized into two types: multi-loop and single-loop. In multi-loop algorithms \cite{kong2021accelerated,ostrovskii2021efficient} one variable is updated in a few consecutive iterations until a certain condition is satisfied before another variable gets updated. Such methods are often difficult to implement in practice because the termination of the inner loop has a high impact on the overall complexity of such algorithms, and selecting a conservative criterion may lead to a high computational cost while an inadequate number of inner iterations may lead to  poor performance. Therefore, there have been some recent efforts \cite{lu2020hybrid,zhang2020single,xu2020unified} to design and analyze single-loop algorithms to solve nonconvex-concave problems. In particular, a convergence rate of $\mathcal O(\epsilon^{-4})$ has been obtained for the aforementioned single-loop algorithms. Authors in \cite{zhang2020single} were able to improve the rate to $\mathcal O(\epsilon^{-2})$ for a special case of nonconvex-concave problem, i.e., $\min_x\max_{y\in Y}f(x)^Ty$, where $Y$ is a probability simplex.
There are also several studies \cite{rafique2018weakly,lin2020gradient,zhang2022sapd} in the stochastic regime. See Table \ref{result table} for more details. 

{\bf PL condition.} Rate results for nonconvex-concave problems can be improved for a class of problems where the objective function satisfies PL condition. Recently, nonconvex-PL SP problems have been studied in \cite{nouiehed2019solving,anagnostidis2021direct} and \cite{yang2022faster} assuming that the objective satisfies one-sided PL condition. Multi-loop algorithms \cite{nouiehed2019solving,anagnostidis2021direct} find an $\epsilon$–first order stationary
point of the problem within \mb{$\mathcal{\tilde O} (\epsilon^{-2})$} iterations, where $\tilde O(\cdot)$  denotes $\mathcal O(\cdot)$ up to a logarithmic factor. The same rate result has been achieved in \cite{fiez2021global} and \cite{yang2022faster} for a single-loop schemes.  More recently, to guarantee a global convergence, Yang et al. \cite{yang2020global} proposed alternating gradient descent ascent algorithm with a linear convergence rate to solve SP problem where the objective satisfies two-sided PL condition. Moreover, the convergence rate of $\mathcal O(\epsilon^{-1})$ has been shown for the stochastic regime under two-sided PL condition. Subsequently, Guo et al. \cite{guo2020fast} improved the dependency of convergence rate on the condition number (the ratio of smoothness parameter to the PL constant).
\begin{table}[htb]
\vspace{-3mm}
\caption{Comparison of complexity between some of the main existing methods for solving SP problem}
\label{result table}
\renewcommand{\arraystretch}{2}
\centering
\resizebox{0.5\linewidth}{!}{
\centering
\begin{tabular}{|c|c|cc|c|}
\hline
\multirow{2}{*}{References} & \multirow{2}{*}{Problem} & \multicolumn{2}{c|}{Complexity}                 & \multirow{2}{*}{\# of loops} \\ \cline{3-4}
                            &                          & \multicolumn{1}{c|}{det.} & stoch. &                             \\ \hline
\cite{hamedani2021primal,zhao2021accelerated}&SC-C&                            \multicolumn{1}{c|}{$\mathcal O(\epsilon^{-0.5})$}            & $\mathcal O(\epsilon^{-1})$         & Single                  \\ \hline
\cite{chambolle2016ergodic,juditsky2011solving,zhao2021accelerated}&C-C&                            \multicolumn{1}{c|}{$\mathcal O(\epsilon^{-1})$}            & $\mathcal O(\epsilon^{-2})$         & Single                  \\ \hline
                            
\cite{rafique2018weakly}                          & NC-C                         & \multicolumn{1}{c|}{$\mathcal O(\epsilon^{-6})$}            & $\tilde{\mathcal O}(\epsilon^{-6})$         & Double                  \\ \hline
\cite{lin2020gradient}                           & NC-C                        & \multicolumn{1}{c|}{$\mathcal O(\epsilon^{-6})$}             & $\mathcal O(\epsilon^{-8})$          & Single                  \\ \hline
\cite{zhang2022sapd}&NC-C& \multicolumn{1}{c|}{--}&$\mathcal O(\epsilon^{-6})$&Double\\ \hline
\cite{fiez2021global}                           & NC-PL     & \multicolumn{1}{c|}
{$\mathcal{\tilde O} (\epsilon^{-2})$}             & --          & Single                  \\ \hline
{This paper}                   &  {PL-C}     & \multicolumn{1}{c|}{{$\mathcal O(\epsilon^{-2})$}}             & {$\mathcal O(\epsilon^{-4})$}          & {Single}                        \\\hline
\end{tabular}
}
\end{table}

\subsection{Contributions}
The existing methods for solving nonconvex-concave SP problems often suffer from slow convergence and/or contain multiple loops. Our main contribution lies in proposing a novel single-loop accelerated primal-dual algorithm with convergence rate results for PL-game appearing for the first time in the literature to the best of our knowledge. 
Our main contributions are summarized as follows:
(i) We propose an accelerated primal-dual scheme to solve problem \eqref{main}. Our main idea lies in designing a novel algorithm by combining an accelerated step in the primal variable with a dual step involving a momentum in terms of the gradient of the objective function. 
(ii) Under a stochastic setting, using an acceleration where mini-batch sample gradients are utilized, our method achieves an oracle complexity (number of sample gradients calls) of $\mathcal O(\epsilon^{-4})$. (iii) Under a deterministic regime,   
    we demonstrate a convergence guarantee of $\mathcal O(\epsilon^{-2})$ to find an $\epsilon$-stationary solution. This is the best-known rate for SP problems satisfying one-sided PL condition to the best of our knowledge. 

\section{Preliminaries}
First we define some important notations. 

{\bf Notations.} $\|x\|$ denotes the Euclidean vector norm, i.e., $\|x\|=\sqrt{x^Tx}$. $\mbox{prox}_g(x)$ denotes the proximal operator with respect to $g$ at $x$, i.e., $\mbox{prox}_g(y)\triangleq \mbox{argmin}_x\{\tfrac{1}{2}\|x-y\|^2+g(x)\}$. $\mathbb E[x]$ is used to denote the expectation of a random variable $x$. We define $x^*(y)\triangleq \mbox{argmin}_x \ml({x,y)}$. Given the mini-batch samples $ \mathcal U=\{\xi^i\}_{i=1}^b$ and $\mathcal V=\{\bar \xi^i\}_{i=1}^b$, we let $\nabla_x \ml_{\mathcal U}(x,y)={1\over b} \sum_{i=1}^b\nabla_x \ml(x,y;\xi^i)$ and $\nabla_y \ml_{\mathcal V}(x,y)={1\over b} \sum_{i=1}^b\nabla_y \ml(x,y;\bar \xi^i)$. We defined $\sigma$-algebras $\mathcal H_k=\{\mathcal U_1,\mathcal V_1,\mathcal U_2,\mathcal V_2,\hdots,\mathcal U_{k-1}, \mathcal V_{k-1}\}$ and  $\mathcal F_k=\{\mathcal H_k\cup V_k\}$.

Now we briefly highlight a few aspects of the PL condition \cite{polyak1963gradient} that differentiate it from convexity and make it a more relevant and appealing setting for many machine learning applications. 
For unconstrained minimization problem $\min_{x\in \mathbb R^n} f(x)$, we say that a function satisfies the PL inequality if  for some $\mu>0$,
${1\over 2}\|\zizi{\nabla } f(x)\|^2\geq \mu(f(x)-f(x^*))$ for all $x\in \mathbb R^n$.
 To verify the PL condition, we need access to the value of the objective function the norm of the gradient which is often tractable and can be estimated from a sub-sample data. However, for verifying convexity, one needs to estimate the minimum eigenvalue of the Hessian matrix. Moreover, the norm of the gradient is much more resilient to perturbation of the objective function than the smallest eigenvalue of the Hessian \cite{bassily2018exponential}.

PL condition does not require strong convexity or even convexity of the objective function. 
It has been shown that it is satisfied for different class of problems, for instance, conditions like restricted secant inequality \cite{zhang2013gradient} and one-point convexity \cite{allen2018natasha} are special cases of PL condition. Problems satisfying such conditions include dictionary learning \cite{arora2015simple}, neural networks \cite{li2017convergence} and phase \mort{retrieval} \cite{chen2015solving}, to name a few. 
In this paper, we consider a min-max SP problem and we assume that the objective function satisfies one-sided PL inequality.
\begin{definition}\label{def PL}
A continuously differentiable function $\ml(x,y)$ satisfies the one-sided PL condition if there exists a constant $\mu>0$ such that ${1\over2}\|\nabla_x \ml{(x,y)}\|^2\geq \mu(\ml{(x,y)}- \ml({x^*(y),y)}),$ for all $x\in \mathcal X, y\in \mathcal Y,$
where $\ml({x^*(y),y)})=\min_x \ml({x,y)}$.
\end{definition}


Now we state our main assumptions.
 \begin{assumption}\label{assump0} (i) The solution set of problem \eqref{main} is nonempty; (ii)
Function $h(y)$ is convex and possibly nonsmooth; (iii) $\ml(x,y)$ is continuously differentiable satisfying one-sided PL condition and $\ml(x,\cdot)$ is concave for any $x\in \mathcal X$.
\end{assumption}


\begin{assumption}\label{assump1} $\nabla_x\ml$ is Lipschitz continuous, i.e., there exist $L_{xx}\geq0$ and $L_{xy}\geq0$ such that
$ \|\nabla_x \ml(x,y)- \nabla_x \ml(\bar x,\bar y)\| \leq{L_{xx}} \|x-\bar x\|+{L_{xy}} \|y-\bar y\|.$
Moreover, $\ml(x,y)$ is linear in terms of $y$.
 \end{assumption}
 Note that Assumption \ref{assump1} implies that 
 \begin{align}\label{assump3}
 \ml(x,y)-\ml(\bar x,y)-\langle \nabla_x \ml(\bar x,y),x-\bar x\rangle \leq \tfrac{L_{xx}}{2} \mor{\|x-\bar x\|^2}.
 \end{align}
 
 Under stochastic setting, we assume that the sample gradients can be generated \zizi{by} satisfying the following standard conditions.
 
 \begin{assumption}\label{assump:stoch}
 Each component function $\ml(x,y;\xi)$ has unbiased stochastic gradients with bounded variance:
\begin{align*}\quad &\mathbb E[\nabla_x\ml(x,y,\xi)\mid \mathcal F_k]=0,\ \mathbb E[\nabla_y\ml(x,y,\xi)\mid \mathcal H_k]=0,\\
&\mathbb E[\|\nabla_x\ml(x,y;\xi)-\nabla_x\ml(x,y)\|^2]\leq \nu^2_x,\\
&\mathbb E[\|\nabla_y\ml(x,y;\xi)-\nabla_y\ml(x,y)\|^2]\leq \nu^2_y.
\end{align*}
 \end{assumption}

\section{Primal-Dual Method with Momentum}\label{sec:alg}
In this section, we propose a primal-dual algorithm with momentum (PDM) for deterministic PL-concave problems. The details of the method can be seen in Algorithm \ref{alg2}. Then, we introduce stochastic PDM (SPDM) for stochastic setting (see Algorithm \ref{alg1}).

\begin{algorithm}[htb]
\caption{ Primal-Dual with Momentum (PDM)}
 \label{alg2}
\begin{algorithmic}[1]
   \STATE Given $x_0, \tilde x_0,y_0$, $\alpha_k\in(0,1]$, and positive sequences $\{\sigma_k\}$, $\{\gamma_k$\} and $\{\lambda_k\}$;
   \FOR{$k=0\hdots T-1$}
   \STATE\label{update z2} $z_{k+1} =(1-\alpha_k)\tilde x_{k} +\alpha_kx_{k}$;
   \STATE \label{update p,q2} $p_k=\nabla_y \ml{(z_{k+1},y_{k})}$ and $q_k={1\over \gamma_k(1-L_{xx}\gamma_k) \mu}(\nabla_y\ml{(x_{k},y_{k})}-\nabla_y\ml{(x_{k-1},y_{k})})$;
   \STATE\label{update y2} $y_{k+1}= \mbox{prox}_{\sigma_{k},h}\left(y_{k}+\sigma_k (p_k+ q_k)\right)$;
   \STATE\label{update r2} $r_k=\nabla_x \ml(z_{k+1},y_{k+1})$;
   \STATE\label{update x2} $x_{k+1}= \left(x_k -\gamma_k r_k\right)$;
   \STATE\label{update tilde x2} $\tilde x_{k+1}=(z_{k+1}-\lambda_k r_k)$;
   \ENDFOR
\end{algorithmic}
\end{algorithm}
Algorithm \ref{alg2} consists of a single loop primal-dual steps. After initialization of parameters, 
at each iteration $k\geq 0$, a proximal gradient  ascent step for the variable $y$ is taken in the direction of $\nabla_y \ml$ with an additive momentum term $q_k$. Such a momentum is an algorithmic approach to gain acceleration for solving PL-concave problems 
Finally, after computing gradient $\nabla_x\ml$ at $(z_{k+1},y_{k+1})$, two gradient descent steps for the variable $x$ is taken to generate $x_{k+1}$ and $\tilde x_{k+1}$ which then will be combined by a convex combination in the next iteration. 

\begin{remark}
If we let $\lambda_k=\alpha_k\zizi{\gamma_k}$, then the primal step in Algorithm \ref{alg2} will be similar to one of the variants of the Nesterov’s acceleration (see \cite{nesterov2003introductory} and \cite{ghadimi2016accelerated}). Moreover, when $\lambda_k=\gamma_k$ it can be shown that $z_{k+1}=x_k$ and $x_{k+1}=\tilde x_{k+1}$ which is similar to a gradient descent step for the minimization variable. 
\end{remark}

For a stochastic setting, SPDM is proposed in  Algorithm \ref{alg1} where the main steps of the algorithm is similar to  Algorithm \ref{alg2}. The main difference is that  instead of computing the exact gradient, we estimate the gradient of the function by drawing mini-batch samples $\mathcal U_k$ and $\mathcal V_k$ in Step \ref{gen sample}.
\begin{algorithm}[htb]
\caption{Stochastic Primal-Dual with Momentum (SPDM)}
 \label{alg1}
\begin{algorithmic}[1]
   \STATE Given $x_0, \tilde x_0,y_0$, $\alpha_k\in(0,1]$, and positive sequences $\{\sigma_k\}$, $\{\gamma_k$\} and $\{\lambda_k\}$;
   \FOR{$k=0\hdots T-1$}
   \STATE\label{update z} $z_{k+1} =(1-\alpha_k)\tilde x_{k} +\alpha_kx_{k}$;
   \STATE\label{gen sample} Generate randomly mini-batch samples \\$ \mathcal U_k=\{\xi^i_k\}_{i=1}^b$ and $\mathcal V_k=\{\bar \xi^i_k\}_{i=1}^b$; 
   \STATE \label{update p,q} $q_k=\tfrac{1}{(\gamma_k-L_{xx}\gamma_k^2) \mu}(\nabla_y\ml_{\mathcal V_k}{(x_{k},y_{k})}-\nabla_y\ml_{\mathcal V_k}{(x_{k-1},y_{k})})$ and $p_k=\nabla_y \ml_{\mathcal V_k}{(z_{k+1},y_{k})}$;
   \STATE\label{update y} $y_{k+1}= \mbox{prox}_{\sigma_{k},h}\left(y_{k}+\sigma_k (p_k+ q_k)\right)$;
   \STATE\label{update r} $r_k=\nabla_x \ml_{\mathcal U_k}(z_{k+1},y_{k+1})$;
   \STATE\label{update x} $x_{k+1}= \left(x_k -\gamma_k r_k\right)$;
   \STATE\label{update tilde x} $\tilde x_{k+1}=(z_{k+1}-\lambda_k r_k)$;
   \ENDFOR
\end{algorithmic}
\end{algorithm}

 
\section{Convergence Analysis}\label{sec:conv}
In this section, we study the convergence properties of  and \ref{alg1} for stochastic (and also deterministic) settings. All related proofs are provided in the appendix. Our goal is to find a first order stationary point of problem \eqref{main}. For a given positive $\epsilon$, we define a point $(x,y)$ as an $\epsilon$-stationary solution of problem \eqref{main} if $\|\nabla_x\Phi(x,y)\|\leq \epsilon$ and $\nabla_y\mathcal L(x,y)\in h(y)+\mathcal B(0,r\epsilon)$ for some $r>0$. 



For our analysis, for all $k\in(0,T-1)$,  define $C_k$ as:
\begin{align}\label{align1}
&C_k\triangleq 1-L_{xx}\gamma_k-\tfrac{L_{xx}(\gamma_k-\lambda_k)^2}{2\alpha_k\Gamma_k\gamma_k}\left(\sum_{\tau=k}^{T-1} \Gamma_\tau\right)\geq 0,
\end{align}
where $ \Gamma_k \triangleq
    \begin{cases}
      1 & k=0\\
      (1-\alpha_k)\Gamma_{k-1} & k\geq1\\
    \end{cases}.$
\begin{remark}
By choosing $\alpha_k=\tfrac{2}{k+1}$, $\lambda_k=\tfrac{1}{2L_{xx}}$ and $\gamma_k\in [\lambda_k,(1+\alpha_k/4)\lambda_k]$ for any $k\geq 0$, from definition of $C_k$, one can show that $C_k\geq 11/32$ (see \cite{ghadimi2016accelerated}).
\end{remark}
Now we establish the convergence rate of SPDM for solving stochastic PL-concave SP problem \eqref{main}. In Algorithm \ref{alg1}, to estimate the gradient of the function, we draw mini-batch samples $\mathcal U_k$ and $\mathcal V_k$ at each iteration, where $|\mathcal U_k|=|\mathcal V_k|=b$. 
\begin{theorem}\label{th1}
Let $\{x_k, y_{k},z_k\}_{{k} \geq0}$ generated by Algorithm \ref{alg1} and suppose Assumptions \ref{assump0}, \ref{assump1} and \ref{assump:stoch} hold.
 Moreover, let $\sigma_k= \tfrac{\mu}{36L^2_{xy}}$ $\alpha_k=\tfrac{2}{k+1}$, $\lambda_k=\tfrac{1}{2L_{xx}}$ and $\gamma_k\in [\lambda_k,(1+\alpha_k/4)\lambda_k]$ for any $k\geq 0$ and $b=T$. Then,  there exists an iteration $k\in\{0,\hdots,T\}$ such that $(z_k,y_k)$ is an $\epsilon$-stationary point of problem \eqref{main} which can be obtained within $\mathcal O(\epsilon^{-4})$ evaluations of sample gradients. 
\end{theorem}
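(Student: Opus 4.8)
The plan is to fuse two acceleration mechanisms through a single weighted telescoping inequality: an accelerated-gradient analysis for the nonconvex primal update in the spirit of \cite{ghadimi2016accelerated}, and an extrapolated primal--dual analysis for the dual update in the spirit of \cite{hamedani2021primal}. Two structural facts drive everything. First, since $h$ does not depend on $x$ we have $\nabla_x\Phi=\nabla_x\mathcal L$, so the primal stationarity residual is exactly $\|r_k\|$, the quantity the algorithm already forms. Second, because $\mathcal L$ is linear in $y$ (Assumption \ref{assump1}), the dual gradient $\nabla_y\mathcal L(\cdot,y)$ is independent of $y$; hence $p_k$ may be read at any dual point and the momentum $q_k$ encodes precisely the change in the dual gradient caused by the primal motion $x_{k-1}\to x_k$, which is the extrapolation that buys dual acceleration.

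First I would derive the primal one-step inequality by writing the smoothness bound \eqref{assump3} at both primal updates $x_{k+1}=x_k-\gamma_k r_k$ and $\tilde x_{k+1}=z_{k+1}-\lambda_k r_k$ and eliminating $z_{k+1}$ through the convex combination $z_{k+1}=(1-\alpha_k)\tilde x_k+\alpha_k x_k$. Collecting the quadratic terms, the coefficient multiplying $\|r_k\|^2$ is exactly $C_k$ of \eqref{align1} (the backward sum $\sum_{\tau\ge k}\Gamma_\tau$ being what the accelerated function-gap telescoping leaves behind), and the stated parameters keep $C_k\ge 11/32$. The PL inequality (Definition \ref{def PL}) is then used to dominate the primal gap $\mathcal L(z_{k+1},y_{k+1})-\mathcal L(x^*(y_{k+1}),y_{k+1})$ by $\tfrac1{2\mu}\|r_k\|^2$; this is why $\mu$ appears in the momentum coefficient, and it lets the accumulated primal gap be folded back into the $\|r_k\|^2$ budget rather than growing. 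What survives on the left is a bound of the form $\sum_k \tfrac{C_k}{\gamma_k}\|r_k\|^2\le(\text{initial gap})+(\text{$y$-coupling})+(\text{noise})$.

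Next I would extract the dual one-step inequality from the proximal optimality condition for $y_{k+1}$,
\begin{equation*}
\langle y_{k+1}-y_k-\sigma_k(p_k+q_k),\,y-y_{k+1}\rangle+\sigma_k\big(h(y)-h(y_{k+1})\big)\ge 0\qquad\forall y,
\end{equation*}
using linearity of $\mathcal L$ in $y$ to identify $p_k$ with $\nabla_y\mathcal L(z_{k+1},\cdot)$ and rearranging into the three-point form so that the dual gap $\Phi(z_{k+1},y)-\Phi(z_{k+1},y_{k+1})$ is bounded by $\tfrac1{2\sigma_k}\big(\|y-y_k\|^2-\|y-y_{k+1}\|^2\big)$ minus $\tfrac1{2\sigma_k}\|y_{k+1}-y_k\|^2$ plus a cross term coupling the dual displacement to the primal increment. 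The entire purpose of the coefficient $\tfrac1{(\gamma_k-L_{xx}\gamma_k^2)\mu}$ in $q_k$ is to size this cross term so that, once the primal and dual inequalities are added, the $y$-coupling left over from the primal step cancels instead of accumulating. The dual stationarity residual $\mathrm{dist}\big(\nabla_y\mathcal L(z_k,y_k),\partial h(y_k)\big)$ is read off from the same optimality condition and controlled by $\|y_{k+1}-y_k\|$ and $\|q_k\|$, both of which the telescoping keeps summable.

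Finally I would add the weighted primal and dual inequalities, take conditional expectations, and pass to the stochastic regime. Unbiasedness (Assumption \ref{assump:stoch}, conditioning on $\mathcal F_k$ and $\mathcal H_k$) kills the noise inner-product terms, while the variance bounds contribute multiples of $\nu_x^2/b$ and $\nu_y^2/b$ per step; care is needed for the variance of the \emph{difference} of stochastic gradients in $q_k$, which is formed on the shared batch $\mathcal V_k$. Summing over $k$ and using $C_k\ge 11/32$ with $\gamma_k=\Theta(1/L_{xx})$, the left side dominates $\sum_{k=0}^{T-1}\mathbb E\|r_k\|^2$ and the right side is $O(1)+O\big(T(\nu_x^2+\nu_y^2)/b\big)$, so the best iterate satisfies $\min_k\mathbb E\|r_k\|^2\le O(1/T)+O(\nu^2/b)$, and the same estimate controls the dual residual. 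Choosing $b=T$ balances both contributions at $O(1/T)$, whence $\epsilon$-stationarity requires $T=O(\epsilon^{-2})$ iterations and $Tb=O(\epsilon^{-4})$ sample-gradient evaluations. I expect the coupling step to be the main obstacle: tracking the moving minimizer $x^*(y_k)$ across iterations and verifying that the prescribed momentum coefficient makes the primal--dual cross terms telescope is exactly what pins down the admissible relations among $\sigma_k,\gamma_k,\lambda_k$ and $\alpha_k$.
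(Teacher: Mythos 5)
Your proposal is correct and follows essentially the same route as the paper's proof: a Ghadimi--Lan-style accelerated descent inequality for the primal update (yielding the coefficient $C_k$ and invoking PL to absorb the gap $\ml(z_{k+1},y_{k+1})-\ml(x^*(y_{k+1}),y_{k+1})$), combined with the prox optimality condition and a telescoping of the $\beta_k$-weighted momentum terms $\langle \bar q_k, y-y_k\rangle$ for the dual, a Young's-inequality cross-term bound, unbiasedness plus variance bounds for the shared-batch noise in $q_k$, and the choice $b=T$ with a best-iterate argument giving $T=\mathcal O(\epsilon^{-2})$ and $Tb=\mathcal O(\epsilon^{-4})$ sample gradients. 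The coupling step you flag as the main obstacle is indeed where the paper's analysis concentrates (sizing terms (A) and (B) via $\sigma_k$, $\tau_k$, $\bar\alpha_k$ so both are dominated), so your plan matches the paper's argument in both structure and substance.
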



Consider function $\ml$ in problem \eqref{main} to be deterministic, i.e. exact gradients $\nabla_x \ml$ and $\nabla_y \ml$ are available. We show that the convergence rate can be improved to $\mathcal O(\epsilon^{-2})$. 
\begin{theorem}\label{th2}
Let $\{x_k, y_{k},z_k\}_{{k} \geq0}$ generated by Algorithm \ref{alg2} and suppose Assumptions \ref{assump0}, \ref{assump1} hold.
 Choosing parameters as Theorem \ref{th1}, there exists an iteration $k\in\{0,\hdots,T\}$ such that $(z_k,y_k)$ is an $\epsilon$-stationary point which can be obtained within $\mathcal O(\epsilon^{-2})$ evaluations of the gradients. 
\end{theorem}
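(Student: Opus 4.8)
The plan is to run a single coupled Lyapunov argument with three ingredients: an accelerated descent estimate for the primal block, an extrapolated proximal-ascent estimate for the dual block, and a $\Gamma_k$-weighted telescoping that exploits the nonnegativity $C_k \geq 11/32$ from \eqref{align1}. Since Theorem \ref{th2} is the deterministic specialization of Theorem \ref{th1}, I would derive it by setting the variance parameters to zero (exact gradients, so no mini-batch is needed), which deletes every $\mathcal O(1/b)$ error term and leaves only the $\mathcal O(1/T)$ optimization error — this is precisely what upgrades the rate from $\mathcal O(\epsilon^{-4})$ to $\mathcal O(\epsilon^{-2})$. Throughout I track the primal gap $\Delta_k \triangleq \ml(\tilde x_k,y_k)-\ml(x^*(y_k),y_k)\geq 0$ together with the dual proximal residual of $y_k$, and the target is to show that the best index makes both $\|\nabla_x\ml(z_k,y_k)\|$ and the dual residual of size $\mathcal O(1/\sqrt T)$.

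\textbf{Primal estimate.} Applying the descent inequality \eqref{assump3} at $z_{k+1}$ along the step $\tilde x_{k+1}=z_{k+1}-\lambda_k r_k$ (line \ref{update tilde x2}) with $\lambda_k=1/(2L_{xx})$ gives $\ml(\tilde x_{k+1},y_{k+1})\leq \ml(z_{k+1},y_{k+1})-\tfrac{3\lambda_k}{4}\|r_k\|^2$. The Nesterov-type coupling $z_{k+1}=(1-\alpha_k)\tilde x_k+\alpha_k x_k$ together with the momentum update $x_{k+1}=x_k-\gamma_k r_k$ (line \ref{update x2}) lets me compare $\ml(z_{k+1},\cdot)$ against $(1-\alpha_k)\ml(\tilde x_k,\cdot)$ plus a term in the displacement of $x_k$ from a reference point, which is exactly the Ghadimi--Lan three-sequence estimate. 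The coefficient $C_k$ of \eqref{align1} is precisely what remains in front of $\|r_k\|^2$ after the $\Gamma_k$-weighted telescoping, so $C_k\geq 11/32$ keeps the primal decrease bounded away from zero.

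\textbf{Dual estimate.} The prox optimality of line \ref{update y2} yields $\nabla_y\ml(z_{k+1})+q_k-\tfrac{1}{\sigma_k}(y_{k+1}-y_k)\in\partial h(y_{k+1})$, where I have used that Assumption \ref{assump1} makes $\ml$ linear in $y$, hence $\nabla_y\ml$ depends only on $x$ and $p_k=\nabla_y\ml(z_{k+1},y_k)=\nabla_y\ml(z_{k+1},y_{k+1})$. Concavity (here linearity) in $y$ turns this into a one-sided bound on $\langle\nabla_y\ml(z_{k+1}),y-y_{k+1}\rangle$ for any test point, and the momentum $q_k\propto \nabla_y\ml(x_k)-\nabla_y\ml(x_{k-1})$ is engineered so the residual $\nabla_y\ml(z_{k+1})-\nabla_y\ml(x_k)$ telescopes in the Chambolle--Pock/Hamedani--Aybat fashion, leaving a cross term controlled by $L_{xy}\|z_{k+1}-x_k\|\,\|y_{k+1}-y_k\|$. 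The single choice $\sigma_k=\mu/(36L_{xy}^2)$ is what lets Young's inequality absorb this cross term into the primal $\|r_k\|^2$ decrease and the dual term $\tfrac{1}{\sigma_k}\|y_{k+1}-y_k\|^2$.

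\textbf{Combination, extraction, and the main obstacle.} Adding the two estimates, multiplying by $\Gamma_k^{-1}$, and summing over $k=0,\dots,T-1$ telescopes the potential; Definition \ref{def PL} converts the accumulated gap into a lower bound on $\sum_k\|\nabla_x\ml\|^2$, while the fixed nonnegative coefficients make the left side a genuine sum of squared residuals. Dividing by $T$ and taking the best index bounds $\min_k\big(\|r_k\|^2+\tfrac{1}{\sigma_k^2}\|y_{k+1}-y_k\|^2+\|q_k\|^2\big)=\mathcal O(1/T)$. Smoothness then gives $\|\nabla_x\ml(z_k,y_k)\|^2\leq 2L_{xx}\big(\ml(z_k,y_k)-\ml(x^*(y_k),y_k)\big)=\mathcal O(1/T)$ for the primal condition, and the prox identity with $\|q_{k-1}\|+\tfrac{1}{\sigma_{k-1}}\|y_k-y_{k-1}\|\to 0$ gives $\nabla_y\ml(z_k,y_k)\in\partial h(y_k)+\mathcal B(0,r\epsilon)$; equating the $\mathcal O(1/T)$ bound to $\epsilon^2$ yields $T=\mathcal O(\epsilon^{-2})$, and since each iteration calls $\mathcal O(1)$ exact gradients the complexity is $\mathcal O(\epsilon^{-2})$. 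The crux is the coupling: because we only have the PL inequality in $x$ rather than strong convexity, there is no clean quadratic handle on $\ml(x^*(y_{k+1}),y_{k+1})-\ml(x^*(y_k),y_k)$ as the minimizer $x^*(y)$ moves, so I expect the hardest step to be verifying that, after the $\Gamma_k$-weighting, the momentum $q_k$ and the single value $\sigma_k=\mu/(36L_{xy}^2)$ simultaneously cancel the telescoping dual-gradient difference and keep the cross term dominated by the guaranteed primal decrease, so that the combined Lyapunov function is monotone up to summable residuals.
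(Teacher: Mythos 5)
Your proposal is correct and follows essentially the same route as the paper: the paper's own proof of Theorem~\ref{th2} is precisely your opening move---specialize the Theorem~\ref{th1} analysis with $\nu_x=\nu_y=0$ so the mini-batch of size $b=T$ is unnecessary and the per-iteration cost drops to $\mathcal O(1)$ gradient evaluations, giving $T=\mathcal O(\epsilon^{-2})$. Your reconstruction of the underlying one-step analysis also matches the paper's Lemma~\ref{lemmath1} in all essentials (Ghadimi--Lan three-sequence primal bound with $C_k\geq 11/32$, Hamedani--Aybat-style momentum telescoping in the dual prox step with Young's inequality absorbing the $L_{xy}\|x_k-x_{k-1}\|\,\|y_{k+1}-y_k\|$ cross term, the PL inequality, best-index extraction, and a Lemma~\ref{epsilon stationary}-type transfer to $\epsilon$-stationarity), and your flagged concern about the moving minimizer $x^*(y)$ is resolved exactly as in the paper by testing at $y=y^*$ and using $\ml(x^*(y_{k+1}),y_{k+1})\leq \ml(x^*,y_{k+1})$, with no quadratic control on $x^*(y)$ ever needed.
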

The proof for deterministic setting, i.e, Theorem \ref{th2}, is similar to Theorem \ref{th1}, by letting $\nu_x=\nu_y=0$.
\section{Numerical Results}\label{sec:numeric}
{\bf Generative Adversial Imitation Learning.} In this section, we implement our method to solve GAIL problem described in Example \ref{ex2}. The code utilized in our experiment was adapted from an existing implementation developed by \cite{yang2020global}. 
To validate the efficiency of the proposed scheme, \afj{we compare PDM algorithm with alternating gradient descent ascent (AGDA) \cite{yang2020global}, Smoothed-GDA \cite{zhang2020single}, and AGP \cite{xu2020unified}.} 
The optimal control problem for LQR can be formulated as follows \cite{cai2019global}:  
\begin{align}\label{num:problem}
&\underset{\pi_t}{\text{minimize} }\quad \mathbb E\left[\sum_{t=0}^{\infty}x_t^{\top}Qx_t + u_t^{\top}Ru_t\right]\\
\nonumber&\text{subject to}\ x_{t+1}=Ax_t+Bu_t,\ u_t=\pi_t(x_t), \ 
x_0\sim \mathbb D_0,
\end{align}
where $Q\in \mathbb R^{d\times d }$, $R\in \mathbb R^{k\times k}$ are both positive definite matrices,  $A\in \mathbb R^{d\times d }$, $B\in \mathbb R^{d\times k }$, , $u_t\in \mathbb R^{k}$ is a control, $x_t\in \mathbb R^{d}$ is a state, $\pi_t$ is a policy, and $\mathbb D_0$ is a given initial distribution. In the infinite-horizon setting with a stochastic initial state $x_0\sim \mathbb D_0$, the optimal control input can be written as a linear function $u_t=-K^*x_t$ where $K^*\in \mathbb R ^{k\times d}$ is the  policy and does not depend on $t$. We denote the expected cumulative cost in \eqref{num:problem} by $C(K,\theta)$, where $\theta=(Q,R)$. To estimate the expected cumulative cost, we sample $n$ initial points $x_0^{(1)},\hdots,x_0^{(n)}$ and estimate $C(K,\theta)$ using sample average:
$C_{n}(K;\theta):={1\over n} \sum_{i=1}^{n} \left[\sum_{t=0}^{\infty}x_t^{\top}Qx_t + u_t^{\top}Ru_t\right]_{x_0=x_0^{(i)}}.$

In GAIL for LQR, the goal is to learn the cost function parameters $Q$ and $R$ from the expert after the trajectories induced by an expert policy $K_E$ are observed. Hence, the min-max formulation of the imitation learning problem is $\min_K\max_{\theta\in \Theta}\ m_n(K,\theta),$
where $m_n(K,\theta)=C_n(K,\theta)-C_n(K_E,\theta)-\phi(\theta)$, $\phi$ is a regularization term that we added so that the problem becomes strongly concave, so can apply AGDA scheme (see \cite{yang2020global}). Moreover, $\Theta$ is the feasible set of the cost parameters.  We assume $\Theta$ is convex and there exist positive constants $\alpha_Q,\beta_Q,\alpha_R$ and $\beta_R$ such that for any $(Q,R)\in \Theta$ we have
$\alpha_QI\preceq Q\preceq \beta_QI, \ \alpha_RI\preceq R\preceq\beta_RI.$
We generate three different data sets for different choices of $d$ and $k$ and we set $n=100$, $\alpha_Q=\alpha_R=0.1$ and $\beta_Q=\beta_R=100$. We choose $\alpha_k=\tfrac{2}{(k+1)}$, $\sigma_k=0.4$ and $\lambda_k=\gamma_k=2e$-4. The exact gradient of the problem in compact form has been established in \cite{fazel2018global}. 
non-accelerated scheme (AGDA). 
\begin{figure}[htb]
    \centering
    \subfloat[d=10, k=7]{{\includegraphics[width=3.8cm]{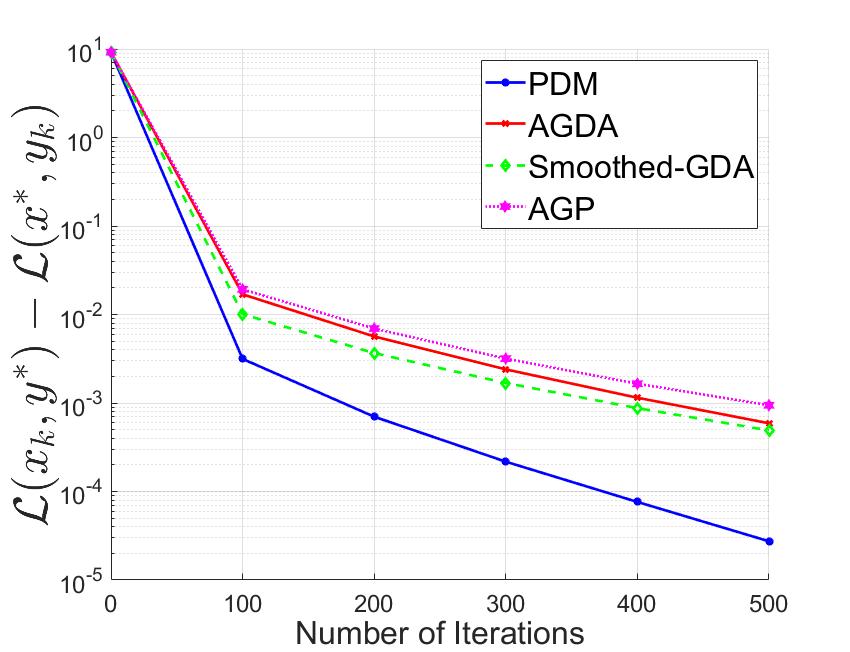} }}
    \subfloat[d=35, k=25]{{\includegraphics[width=3.8cm]{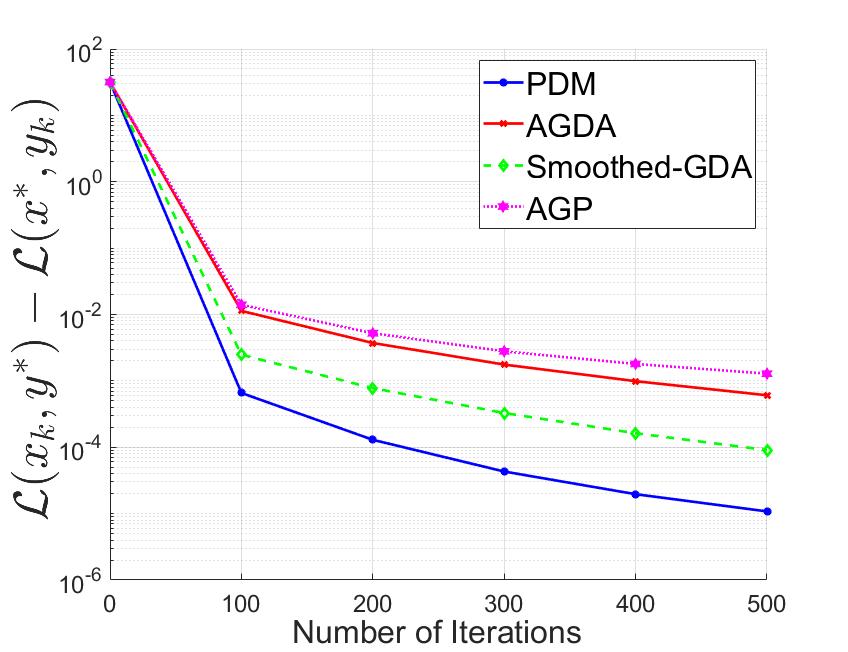} }}
     \subfloat[n=200]{{\includegraphics[width=3.9cm]{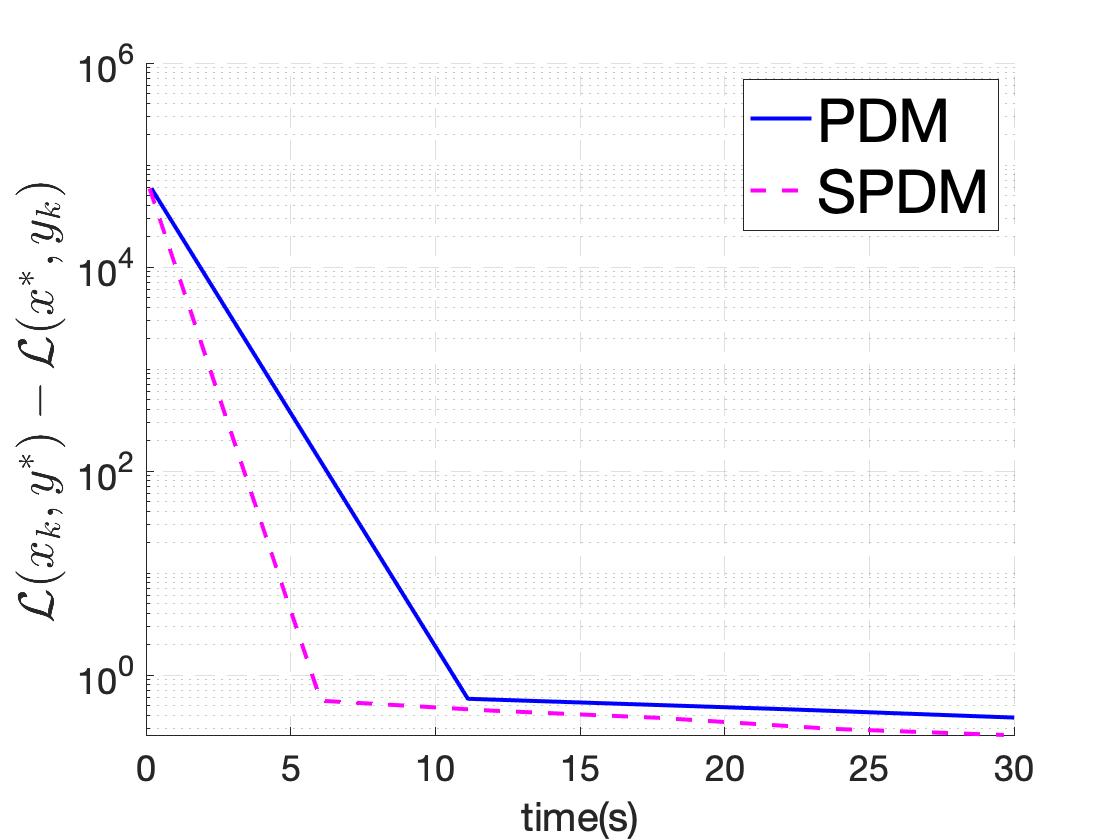} }}
    \subfloat[n=300]{{\includegraphics[width=3.9cm]{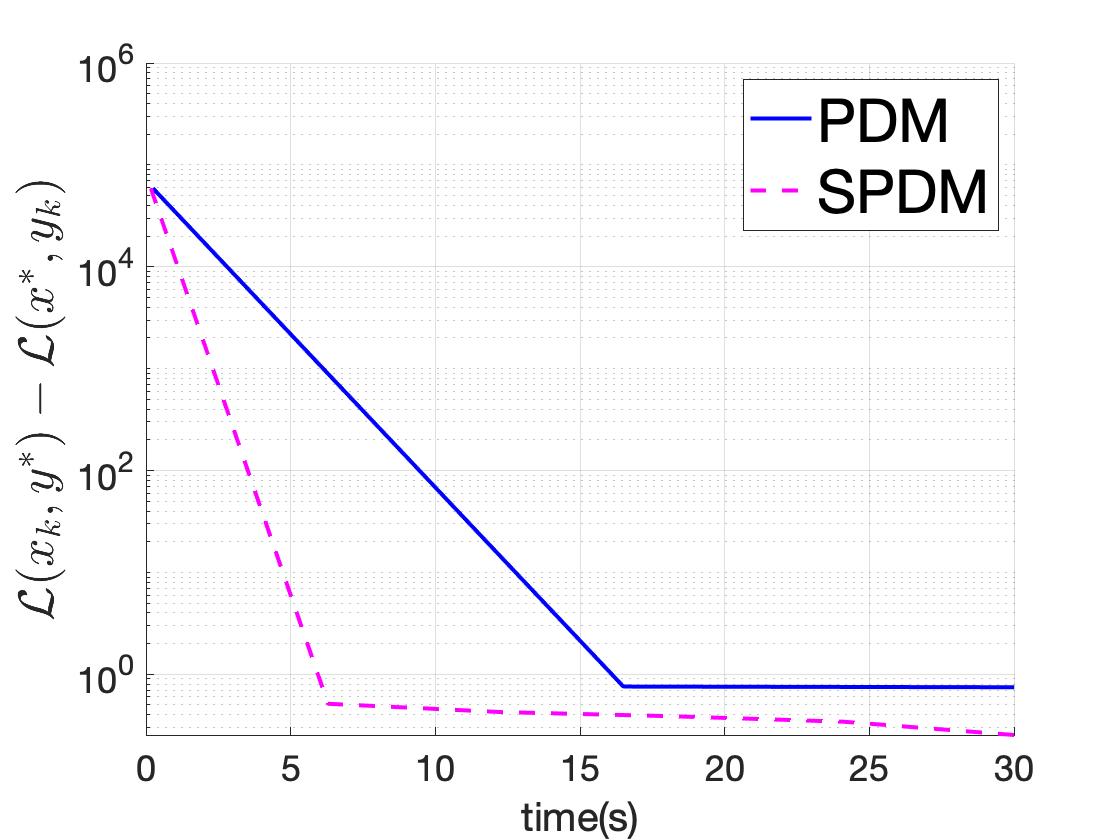} }}
   \caption{ \afj{(a),(b): PDM vs competitive schemes for different dimensions ($n=100$); (c),(d): PDM vs SPDM }}
    \label{figcomp}\vspace{-3mm}
\end{figure}

In Figure \ref{figcomp} (a) and (b), we compared the performance of our proposed method (PDM) with AGDA \cite{yang2020global}, \afj{Smoothed-GDA \cite{zhang2020single}, and AGP \cite{xu2020unified}. We set the same stepsizes for all the methods to ensure fairness in our experiment. Other parameters for competitive methods are selected as suggested in their papers.} 
In Figure \ref{figcomp} (c) and (d), we compared PDM with its stochastic variant (SPDM) by running both algorithms for the same amount of time. As it can be seen SPDM outperforms PDM and its superiority is more evident as $n$ becomes larger.\\
\mrt{{\bf Distributionally robust optimization.}
Consider the following DRO problem.
     \begin{align*}
        \min_{x\in \mathcal X}\max_{y\in \mathcal Y} \sum_{i=1}^n y_i \log (1+\exp(-b_i a_{i}^T x)) ,
     \end{align*}
     Where $\mathcal Y = \{y\in \mathbb R^m_{+}\mid y\geq \delta/n, \ \tfrac{1}{2}\|ny-\mathbf 1_n\|\leq \rho\} $, $\delta = 1/100 $ and $\rho = 50$. We compare our method with stochastic accelerated primal-dual method proposed in \cite{zhao2021accelerated} (SPDHG) and stochastic mirror prox \cite{juditsky2011solving} (SMP). We use real datasets colon-cancer (n=62, m=2000) and leukemia (n=38, m=7129) from  LIBSVM library \cite{libsvm}. Note that in these datasets the number of features are larger than the number of samples, therefore, computing $\nabla_y \mathcal L$ is cheap while $\nabla_x \mathcal L$ can be costly, hence, we use an unbiased estimator $\nabla_x \mathcal L_{\mathcal U}$ with batch size of 10 for all the methods. We run all algorithms for 300 seconds. The performance of the methods are depicted in Figure \ref{fig:dro1}. Table \ref{tabel_dro} summarizes the performance of our algorithm and competitive methods in terms of the gap function. Our scheme outperforms other algorithms which matches with the theoretical result. In fact, PDM has convergence rate of $\mathcal O(1/k)$ and the other two methods have a convergence rate of $\mathcal O(1/\sqrt k)$.}
\begin{figure}[htb]
    \centering
    \subfloat[Colon-cancer]{{\includegraphics[width=3.8cm]{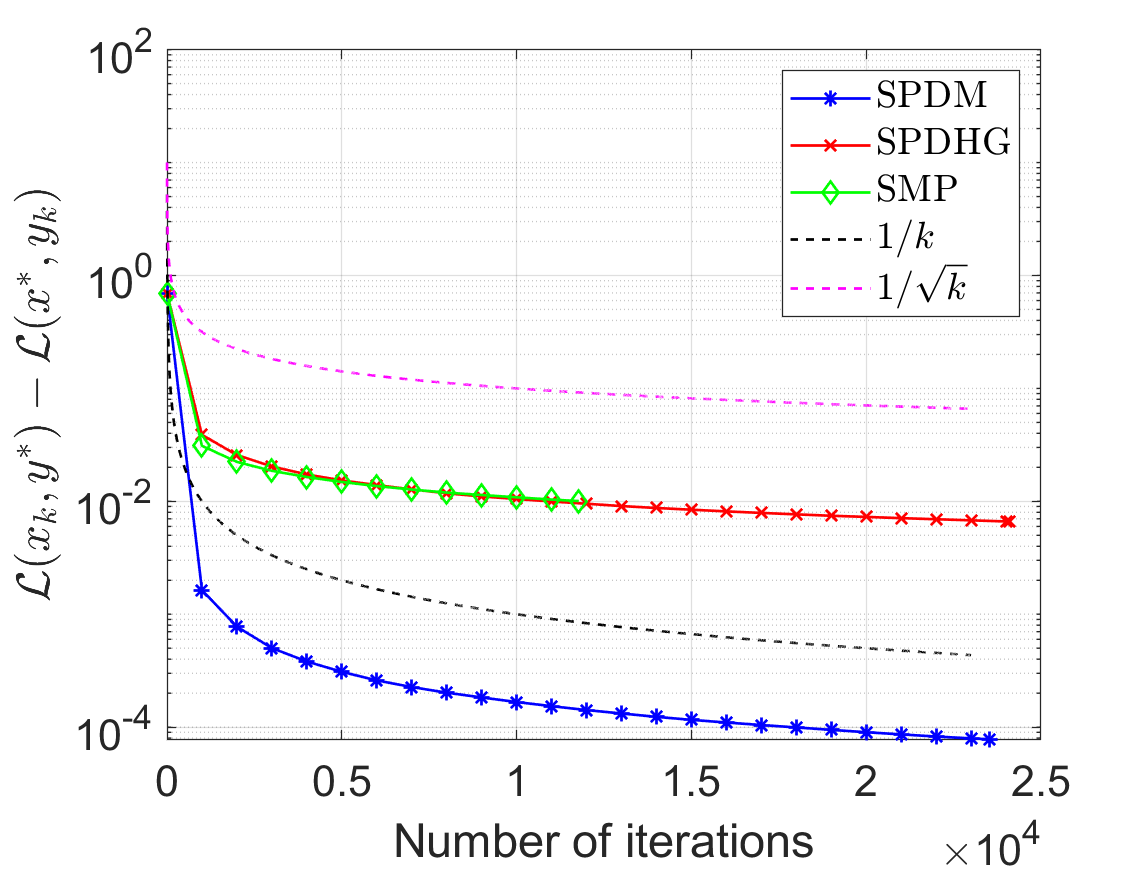} }}
    \subfloat[Colon-cancer]{{\includegraphics[width=3.9cm]{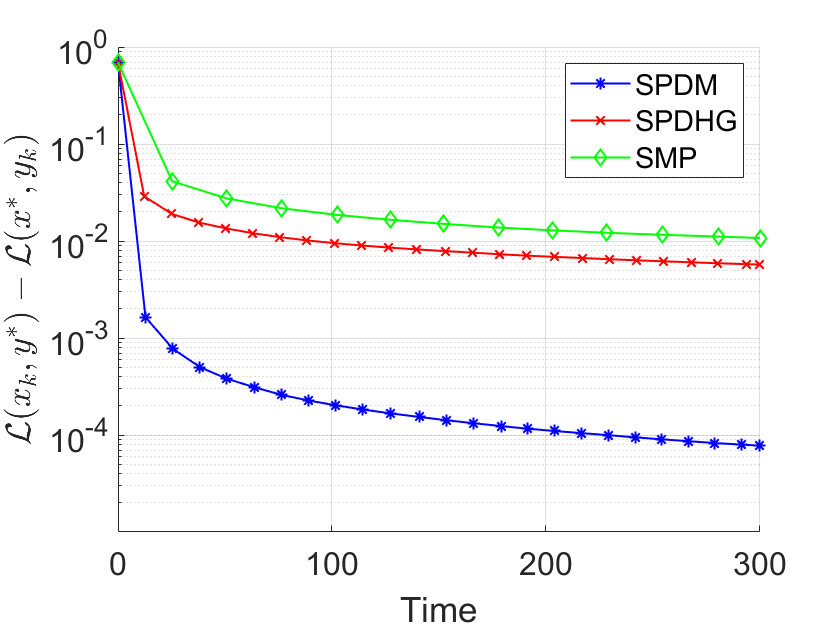} }}
     \subfloat[Leukemia]{{\includegraphics[width=3.9cm]{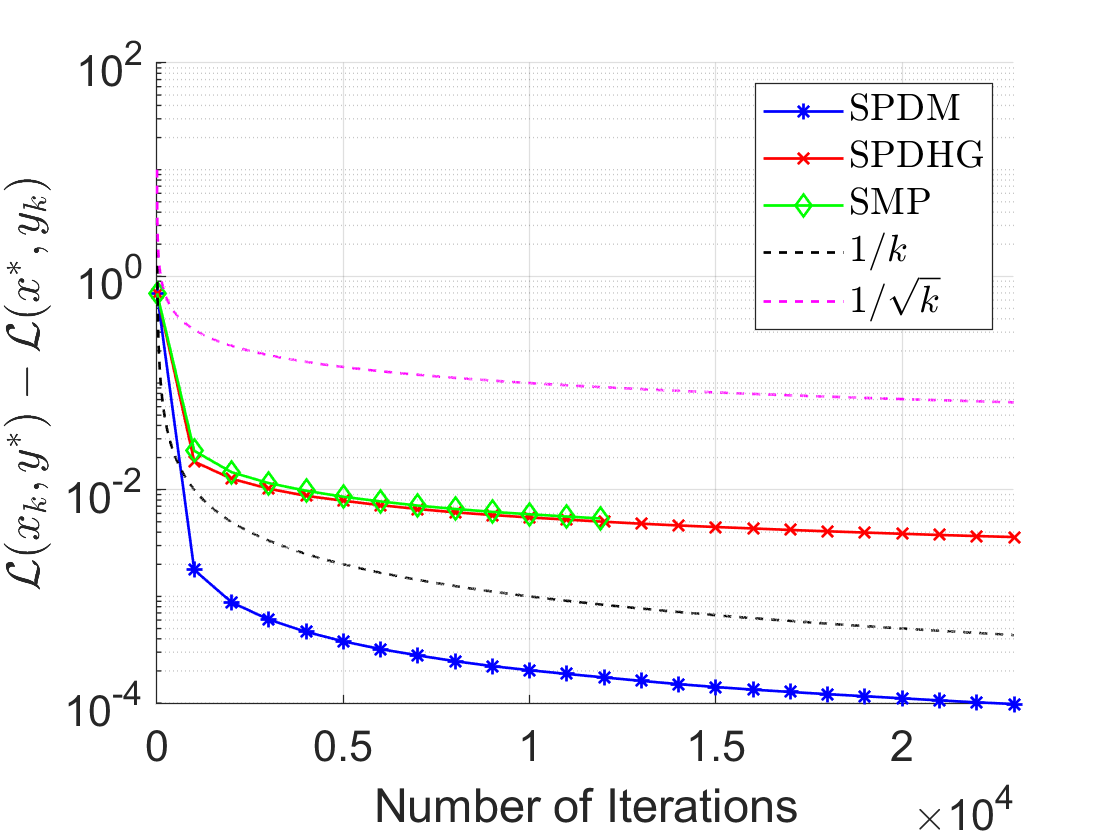} }}
    \subfloat[Leukemia]{{\includegraphics[width=3.9cm]{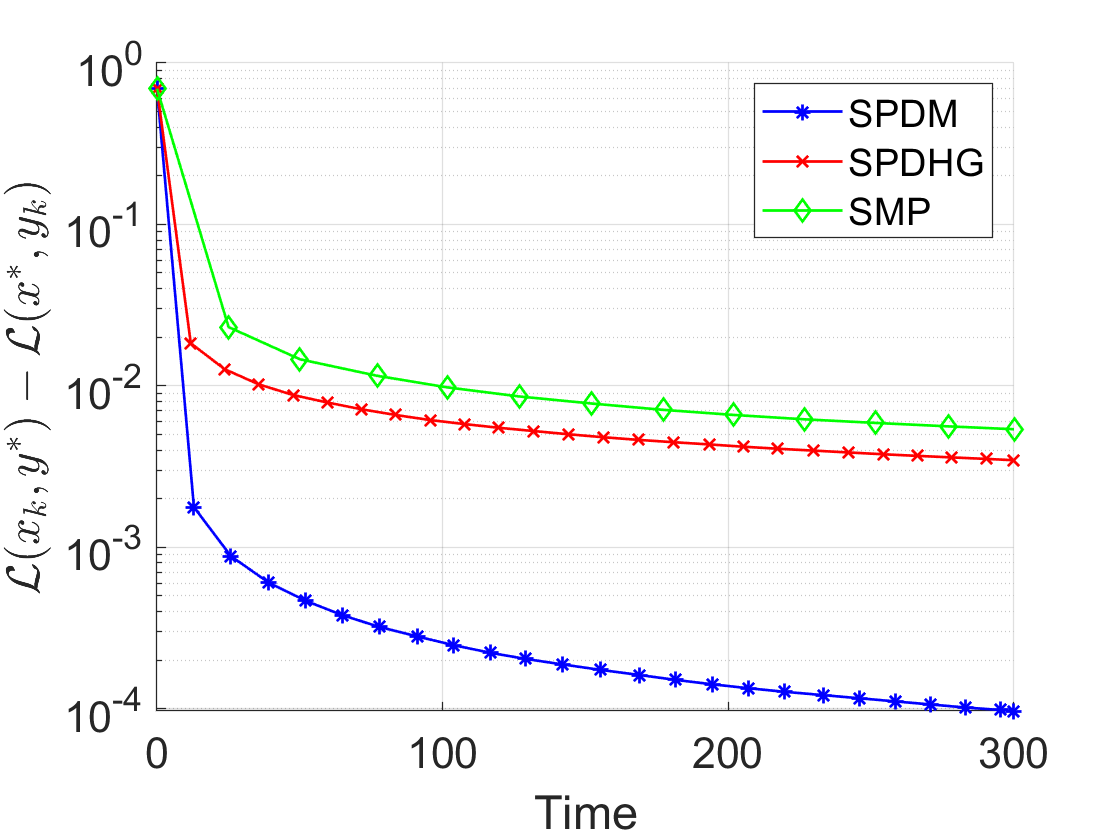} }}
   \caption{ Performance comparison of different methods in terms of iterations and running time for Colon-cancer ($62\times 2000$) and Leukemia ($38\times 7129$) data sets}
    \label{fig:dro1}\vspace{-3mm}
\end{figure}


     \begin{table}[htb]
\centering
\caption{Comparison of gap function 
$\sup_{(x,y)\in\mathcal Z}\left\{\Phi( x_T,y)-\Phi(x, y_T)\right\}$ for different methods }
\begin{tabular}{|c|c|c|c|}\hline
      &SPDM&SPDHG&SMP  \\ \hline
      Colon-cancer& 3.75e-4&1.51e-2&2.70e-2\\ \hline
      Leukemia&2.61e-4&6.99e-3&1.18e-2\\ \hline
    \end{tabular}
\label{tabel_dro}
\end{table}
\section{Concluding Remarks}\label{sec:conclude}
In this paper, we proposed an accelerated primal-dual scheme for solving a class of nonconvex-concave problems where the objective function satisfies the PL condition for both deterministic and stochastic settings. By combining an accelerated step in the minimization variable with an update involving a momentum in terms of the gradient of the objective function for the maximization variable, we obtained a convergence rate of \mbrs{$\mathcal O(\epsilon^{-4})$} and $\mathcal O(\epsilon^{-2})$  for the stochastic and deterministic problems, respectively. To the best of our knowledge, this is the first work that proposed a primal-dual scheme with momentum to solve PL-concave minimax problems. 

There are different interesting directions for future work: (i)  Investigating distributed variant of the proposed scheme over a network of agents; (ii) Considering a more general setting of nonconvex-concave SP problem and  developing a projection-free algorithm. 
\section*{APPENDIX}
In our analysis, we use the following technical lemma. 
\begin{lemma}\label{lem:error}
Given a arbitrary sequences $\{\bar\sigma_k\}_{k\geq 0}\subset \mathbb R^n$ and $\z{\{\bar\alpha_k\}}_{k\geq 0}\subset \mathbb R^{++}$, let $\{v_k\}_{k\geq0}$ be a sequence such that $v_0\in \mathbb R^n$ and $v_{k+1}=v_k+\z{\tfrac{\bar\sigma_k}{\bar \alpha_k}}$. Then, for all $k\geq 0$ and $x\in \mathbb R^n$,
$$\langle \sigma_k,x-v_k\rangle\leq {\z{\bar \alpha_k}\over 2}\|x-v_k\|^2-{\z{\bar \alpha_k}\over 2}\|x-v_{k+1}\|+{1\over 2 \z{\bar \alpha_k}}\|\bar\sigma_k\|^2.$$
\end{lemma}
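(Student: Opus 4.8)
The plan is to reduce the claimed bound to an exact algebraic identity coming from the standard polarization formula, so that no genuine inequality is even needed. First I would use the defining recursion $v_{k+1}=v_k+\bar\sigma_k/\bar\alpha_k$ to eliminate $\bar\sigma_k$ in favor of the increment of the sequence, writing $\bar\sigma_k=\bar\alpha_k(v_{k+1}-v_k)$. Substituting this into the left-hand inner product gives $\langle \bar\sigma_k,\, x-v_k\rangle = \bar\alpha_k\langle v_{k+1}-v_k,\, x-v_k\rangle$, which expresses the whole quantity as a scalar multiple of a single inner product of two differences.

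Next I would invoke the polarization identity $\langle u,w\rangle=\tfrac12\big(\|u\|^2+\|w\|^2-\|u-w\|^2\big)$ with the choice $u=v_{k+1}-v_k$ and $w=x-v_k$. Since $u-w=(v_{k+1}-v_k)-(x-v_k)=v_{k+1}-x$, this yields $\langle v_{k+1}-v_k,\, x-v_k\rangle = \tfrac12\big(\|v_{k+1}-v_k\|^2+\|x-v_k\|^2-\|x-v_{k+1}\|^2\big)$. Multiplying through by $\bar\alpha_k$ and using $\|v_{k+1}-v_k\|^2=\|\bar\sigma_k\|^2/\bar\alpha_k^2$ to rewrite the first term as $\|\bar\sigma_k\|^2/(2\bar\alpha_k)$ reproduces the three terms on the right-hand side of the statement exactly.

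In fact this computation shows that the asserted inequality is actually an equality; the $\le$ in the statement is simply the form in which the bound is later consumed in the proofs of Theorems \ref{th1} and \ref{th2}. I do not anticipate any real obstacle here: the only points requiring care are the bookkeeping of signs in the polarization step and the positivity of $\bar\alpha_k$ (guaranteed by $\{\bar\alpha_k\}\subset\mathbb R^{++}$), which is what permits dividing by $\bar\alpha_k$ and keeps the squared-norm remainder term nonnegative. I would also read the statement with the evident typographical corrections, namely $\bar\sigma_k$ in place of $\sigma_k$ on the left and $\|x-v_{k+1}\|^2$ in place of $\|x-v_{k+1}\|$ on the right, since these are needed for the identity to be dimensionally consistent.
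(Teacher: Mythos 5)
Your proof is correct, and in fact the paper states Lemma~\ref{lem:error} without any proof, so your derivation supplies exactly the standard argument one would expect: substituting $\bar\sigma_k=\bar\alpha_k(v_{k+1}-v_k)$ and applying the polarization identity (equivalently, expanding $\|x-v_{k+1}\|^2=\|x-v_k-\bar\sigma_k/\bar\alpha_k\|^2$ and rearranging) shows the bound holds with equality. You are also right to read the statement with the two typographical corrections ($\bar\sigma_k$ for $\sigma_k$ on the left, $\|x-v_{k+1}\|^2$ for $\|x-v_{k+1}\|$ on the right), since these are needed for the claim to be dimensionally consistent and are the forms actually used in \eqref{2align22}.
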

To prove the convergence rate, we use the following lemma (proof is similar to Lemma 3 in \cite{ghadimi2016accelerated}).
\begin{lemma}\label{epsilon stationary}
For any given $z,\bar z\in \mathbb R^n$ and $c>0$, such that $\|z-\bar z\|\leq c \epsilon$, and $y\in \mathbb R^m$ let $\bar y\triangleq \mbox{prox}_{\sigma,h}(y+\sigma(\nabla_y\mathcal L(\bar z,y)+q+u))$ for some $q,u\in \mathbb R^m$ such that $\|q\|\leq \ell\|\nabla_x\mathcal L(x,y)\|$ and $\|u\|^2\leq \nu_y^2/b$ for some $\ell,\nu_y,b>0$. If $\|\nabla_x\mathcal L(z,y)\|^2+\|\bar y-y\|^2\leq \epsilon^2$, for some $\epsilon>0$, then $\|\nabla_x\mathcal L(z,y)\|\leq  \epsilon$ and $\nabla_y\mathcal L(z,y)\in h(\bar y)+\mathcal B(0,(1/\sigma+\ell+cL_{yx}) \epsilon+\nu_y/\sqrt b)$.
\end{lemma}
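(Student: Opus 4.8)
The plan is to establish the two conclusions in order: the first is immediate, and the second follows from the first-order optimality condition of the proximal step combined with the triangle inequality. First I would observe that since both summands in the hypothesis $\|\nabla_x\mathcal{L}(z,y)\|^2+\|\bar y-y\|^2\leq\epsilon^2$ are nonnegative, each is at most $\epsilon^2$; hence $\|\nabla_x\mathcal{L}(z,y)\|\leq\epsilon$, which is the first claim, and also $\|\bar y-y\|\leq\epsilon$, an estimate I will reuse below.

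Next I would unpack the proximal update. Starting from $\bar y=\mbox{prox}_{\sigma,h}(y+\sigma(\nabla_y\mathcal{L}(\bar z,y)+q+u))$ and invoking the first-order optimality condition of the prox, which is valid since $h$ is convex, I would obtain an explicit subgradient
$$g\triangleq\nabla_y\mathcal{L}(\bar z,y)+q+u+\tfrac{1}{\sigma}(y-\bar y)\in\partial h(\bar y).$$
The point of exhibiting $g$ is that it is a concrete element of $\partial h(\bar y)$ lying close to $\nabla_y\mathcal{L}(z,y)$, which is exactly the object the stationarity conclusion compares against.

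Then I would bound the distance from $\nabla_y\mathcal{L}(z,y)$ to $g$. Using the decomposition $\nabla_y\mathcal{L}(z,y)-g=[\nabla_y\mathcal{L}(z,y)-\nabla_y\mathcal{L}(\bar z,y)]-q-u-\tfrac{1}{\sigma}(y-\bar y)$ and the triangle inequality, I would estimate the four pieces separately. Lipschitz continuity of $\nabla_y\mathcal{L}$ in $x$ together with $\|z-\bar z\|\leq c\epsilon$ controls the first term by $cL_{yx}\epsilon$; the hypothesis $\|q\|\leq\ell\|\nabla_x\mathcal{L}(z,y)\|$ with the already-proven $\|\nabla_x\mathcal{L}(z,y)\|\leq\epsilon$ gives $\|q\|\leq\ell\epsilon$; the variance bound gives $\|u\|\leq\nu_y/\sqrt b$; and the recorded estimate $\|\bar y-y\|\leq\epsilon$ gives $\tfrac{1}{\sigma}\|y-\bar y\|\leq\epsilon/\sigma$. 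Summing these yields precisely the radius $(1/\sigma+\ell+cL_{yx})\epsilon+\nu_y/\sqrt b$, establishing $\nabla_y\mathcal{L}(z,y)\in\partial h(\bar y)+\mathcal{B}(0,(1/\sigma+\ell+cL_{yx})\epsilon+\nu_y/\sqrt b)$.

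The argument is largely routine; the only place demanding care is the bookkeeping of the proximal optimality condition---getting the sign of $(y-\bar y)$ and the factor $1/\sigma$ correct when passing from the definition $\bar y=\mbox{prox}_{\sigma,h}(\cdot)$ to the subgradient inclusion for $g$. Once $g$ is identified correctly, everything reduces to the triangle inequality and direct substitution of the stated hypotheses.
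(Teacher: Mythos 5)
Your proof is correct and takes essentially the same route the paper intends: the paper gives no explicit proof, only citing Lemma 3 of \cite{ghadimi2016accelerated}, and that standard argument is precisely your identification of $g=\nabla_y\mathcal L(\bar z,y)+q+u+\tfrac{1}{\sigma}(y-\bar y)\in\partial h(\bar y)$ from the prox optimality condition, followed by the triangle inequality on the four error terms. You also correctly read the paper's typos in the intended way ($\|\nabla_x\mathcal L(x,y)\|$ as $\|\nabla_x\mathcal L(z,y)\|$, and $h(\bar y)$ as $\partial h(\bar y)$), so no gap remains.
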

To facilitate the analysis, we define some notations.
\begin{definition}
\label{def}
Let $u_k^1\triangleq\nabla_y\ml_{\mathcal V_k}{(x_{k},y_{k})}-\nabla_y\ml{(x_{k},y_{k})}$, $u_k^2\triangleq\nabla_y\ml_{\mathcal V_k}{(x_{k-1},y_{k})}-\nabla_y\ml{(x_{k-1},y_{k})}$, and $u_k^3\triangleq\nabla_y \ml_{\mathcal V_k}{(z_{k+1},y_{k})}-\nabla_y \ml{(z_{k+1},y_{k})}$. Moreover, we define $\zeta_k\triangleq{L_{xx}\Gamma_{k}\over 2}\sum_{\tau=0}^{k}{(\gamma_\tau-\lambda_\tau)^2\over\Gamma_{\tau}\alpha_{\tau}}\|w_\tau\|^2$, $U_k\triangleq\langle \beta_ku_k^3+u_k^1-u_k^2,y_k+v_k\rangle$, and $\Xi_k\triangleq E_k^x+\bar E_k^x+\tfrac{L_{xx}\gamma_k^2}{2}\|w_k\|^2$, where $E_k^x\triangleq -\gamma_k\left\langle w_k,\nabla_x\ml(x_k,y_{k+1})\right\rangle\quad+L_{xx}\gamma_k^2\left\langle w_k,\nabla_x\ml(z_{k+1},y_{k+1})\right\rangle$, and $\bar E_k^x\triangleq \tfrac{L_{xx}\Gamma_{\zal{k-1}}(1-\alpha_k)^2}{ 2}\sum_{\tau=0}^{k}\tfrac{(\gamma_\tau-\lambda_\tau)^2}{\Gamma_{\tau}\alpha_{\tau}}, w_\tau^T\nabla_x\ml(z_{\tau+1},y_{\tau+1})$.
\end{definition}
In the next lemma, we provide a one-step analysis to obtain a bound for the norm of $\nabla_x\ml$ and progress of the dual iterates. This is the main building block of our convergence analysis in Theorem \ref{th1}.
\begin{lemma}\label{lemmath1}
Let $\{x_k, y_{k},z_k\}_{{k} \geq0}$ generated by Algorithm \ref{alg1} and suppose Assumptions \ref{assump0}-\ref{assump:stoch} hold.
 Moreover, let $\beta_k\triangleq\gamma_kC_k\mu$, $D\triangleq {\min \{{\tfrac{{\gamma_k C_k}}{4}, \tfrac{\beta_k}{{4\sigma_k}}}}\}$, $\sigma_k= \tfrac{\mu}{36L^2_{xy}}$ $\alpha_k=\tfrac{2}{k+1}$, $\lambda_k=\tfrac{1}{2L_{xx}}$ and $\gamma_k\in [\lambda_k,(1+\alpha_k/4)\lambda_k]$ for any $k\geq 0$ and $b=T$. Then, the following holds:
 \begin{align}\label{lemma_result}
  &\|\nabla_x \ml(z_{k^*},y_{k^*})\|^2+\|y_{k^*+1}-y_{k^*}\|^2\\
  &\leq \nonumber \tfrac{1}{TD}\biggr[ \ml(x_0,y^*)-\ml(x_T,y^*) +\tfrac{3\beta_0}{4\sigma_0}\|y^*-y_0\|^2 + \tfrac{{L^2_{xy}}\gamma_0^2}{2\beta_0\tau_0}\|\nabla_x \ml(z_0,y_0))\|^2\\\nonumber
  &\quad+\sum_{k=0}^{T-1}\left(\tfrac{1}{\bar \alpha_k} {\|\beta_k u_k^3+u_k^1-u_k^2\|^2}+\Xi_k+\zeta_k+U_k\right)\biggr].
\end{align}
\end{lemma}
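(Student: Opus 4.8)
The plan is to build a single combined primal--dual inequality, telescope it over $k=0,\dots,T-1$, invoke the one-sided PL condition, and then bound the minimizing iterate $k^*$ by the running average, which is the source of the $\tfrac{1}{TD}$ prefactor in \eqref{lemma_result}. I would measure primal progress through the value $\ml(x_k,y^*)$ at a fixed optimal dual point $y^*$ and dual progress through $\|y_{k+1}-y_k\|^2$, arranging the bookkeeping so that the primal--dual cross terms are neutralized by the momentum $q_k$.

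For the primal part, writing $w_k\triangleq r_k$ for the stochastic direction, I would begin from the smoothness estimate \eqref{assump3} evaluated along the three coupled iterates $z_{k+1}=(1-\alpha_k)\tilde x_k+\alpha_k x_k$, $x_{k+1}=x_k-\gamma_k r_k$ and $\tilde x_{k+1}=z_{k+1}-\lambda_k r_k$. The Nesterov/Ghadimi--Lan weighting scheme (cf.\ \cite{ghadimi2016accelerated}) then introduces the sequence $\Gamma_k$ and the coefficient $C_k$ of \eqref{align1}, while the gap $\gamma_k-\lambda_k$ between the two step sizes is exactly what spawns the accumulated remainder $\zeta_k$ and the inner-product remainders gathered into $E_k^x$, $\bar E_k^x$, i.e.\ into $\Xi_k$. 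Applying Definition \ref{def PL} converts the resulting primal function gap into $\tfrac12\|\nabla_x\ml(z_{k+1},y_{k+1})\|^2$, matching the left-hand side of \eqref{lemma_result} after re-indexing.

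For the dual part, I would exploit that $\ml(x,\cdot)$ is linear (Assumption \ref{assump1}), so $\nabla_y\ml(x,y)$ depends on $x$ alone; the momentum $q_k$, scaled by $1/((\gamma_k-L_{xx}\gamma_k^2)\mu)$, is then precisely a prediction of the gradient change $\nabla_y\ml(x_k)-\nabla_y\ml(x_{k-1})$ driven by the primal step, engineered so that the effective ascent direction $p_k+q_k$ produces the cross term needed to cancel its primal counterpart. Feeding the update $y_{k+1}=\mbox{prox}_{\sigma_k,h}(y_k+\sigma_k(p_k+q_k))$ into the three-point estimate of Lemma \ref{lem:error} yields the $\tfrac{\beta_k}{\sigma_k}$-weighted squared distances $\|y^*-y_k\|^2$ that telescope, together with the negative $\|y_{k+1}-y_k\|^2$ term retained on the left. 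Replacing the mini-batch gradients by their exact versions introduces the discrepancies $u_k^1,u_k^2,u_k^3$, whose linear contributions (after centering against the auxiliary sequence $v_k$) assemble into $U_k$ and whose quadratic contributions form $\tfrac{1}{\bar\alpha_k}\|\beta_ku_k^3+u_k^1-u_k^2\|^2$.

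Summing the primal and dual chains and using the prescribed $\sigma_k=\tfrac{\mu}{36L_{xy}^2}$, $\lambda_k=\tfrac{1}{2L_{xx}}$, $\gamma_k\in[\lambda_k,(1+\alpha_k/4)\lambda_k]$ (which force $C_k\ge 11/32>0$, hence $\beta_k=\gamma_kC_k\mu>0$ and a well-defined $D=\min\{\tfrac{\gamma_kC_k}{4},\tfrac{\beta_k}{4\sigma_k}\}$) makes every leading coefficient nonnegative, so the telescoped value and distance terms collapse to the bracketed constants in \eqref{lemma_result} and the remaining residuals can be dropped rather than estimated; dividing by $TD$ and minimizing over $k$ finishes the argument. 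The crux is the coupling step: one has to verify that $q_k$ annihilates the primal--dual cross term at exactly the chosen scaling, which is what dictates the denominator $(\gamma_k-L_{xx}\gamma_k^2)\mu$ and ties $\beta_k=\gamma_kC_k\mu$ to both sides simultaneously. Securing this cancellation while keeping all residual coefficients nonnegative --- so that they may be discarded instead of bounded --- is the delicate heart of the proof.
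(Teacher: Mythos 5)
Your sketch follows essentially the same route as the paper's proof: the Ghadimi--Lan weighting with $\Gamma_k$ and $C_k$ applied to the primal smoothness recursion (the $\gamma_k-\lambda_k$ gap generating $\zeta_k$ and the remainders in $\Xi_k$), the prox optimality condition plus the telescoping identity for the $\beta_k$-weighted momentum terms on the dual side (exploiting linearity of $\ml$ in $y$), the stochastic errors $u_k^1,u_k^2,u_k^3$ centered against the auxiliary sequence $v_k$ of Lemma \ref{lem:error} to form $U_k$ and the quadratic term $\tfrac{1}{\bar\alpha_k}\|\beta_k u_k^3+u_k^1-u_k^2\|^2$, the PL condition to trade half the gradient-norm coefficient for function-gap terms, and the min-over-$k$ versus average argument producing the $\tfrac{1}{TD}$ prefactor. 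Two slips to correct, neither affecting the architecture: $w_k$ must be the gradient noise $r_k-\nabla_x\ml(z_{k+1},y_{k+1})$ rather than $r_k$ itself (otherwise $\Xi_k$, $\zeta_k$, $U_k$ as defined in Definition \ref{def} would not arise and their conditional-expectation bounds would fail), and the momentum $\bar q_k$ does not annihilate the primal--dual cross term outright --- it telescopes, leaving the residual $\langle \bar q_k, y_{k+1}-y_k\rangle$, which is bounded via Young's inequality by $L_{xy}\|x_k-x_{k-1}\|\,\|y_{k+1}-y_k\|$ and absorbed into the paper's Term (A) and Term (B) coefficients.
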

\begin{proof}
Define $\Delta_k\triangleq\nabla_x \ml{(x_k,y_{k+1})}-\nabla_x \ml{(z_{k+1},y_{k+1})}$. From \mor{Assumption \ref{assump1}} and step \ref{update z} of Algorithm \zal{\ref{alg1}}, the following can be obtained,
\begin{align}\label{align5}
\nonumber\|\Delta_k\|&=\|\nabla_x \ml{(x_k,y_{k+1})}-\nabla_x \ml{(z_{k+1},y_{k+1})}\|\leq L_{xx}\|x_k-z_{k+1}\|\\
&=L_{xx}\|x_k-(1-\alpha_k)\tilde x_k-\alpha_k
x_k\|
=L_{xx}(1-\alpha_k)\|\tilde x_k-x_k\|.
\end{align}
\af{Define $w_k\triangleq\nabla_x\ml_{\mathcal U_k}(z_{k+1},y_{k+1})-\nabla_x\ml(z_{k+1},y_{k+1})$}, and using \eqref{assump3} and step \ref{update x} of Algorithm \zal{\ref{alg1}}, one can obtain 
\begin{align}\label {align6}
\nonumber &\ml{(x_{k+1},y_{k+1})}\\
\nonumber&\leq \ml{(x_k,y_{k+1})}+\langle\nabla_x \ml{(x_k,y_{k+1})},x_{k+1}-x_k\rangle+\tfrac{L_{xx}}{ 2}\|x_{k+1}-x_k\|^2\\
\nonumber&=\ml{(x_k,y_{k+1})}+\big\langle \Delta_k+\nabla_x \ml{(z_{k+1},y_{k+1})} -\gamma_k (\nabla_x  \ml{(z_{k+1},y_{k+1})}\af{+w_k})\big\rangle \\ \nonumber&\quad+L_{xx}\tfrac{\gamma_k^2}{ 2}\|\nabla_x\ml{(z_{k+1},y_{k+1})}\af{+w_k}\|^2\\
\nonumber&\leq \ml{(x_k,y_{k+1})}-\gamma_k \left( 1-\tfrac{L_{xx}\gamma_k}{ 2}\right)\|\nabla_x \ml{(z_{k+1},y_{k+1})}\|^2+\gamma_k\|\Delta_k\|\|\nabla_x\ml{(z_{k+1},y_{k+1})}\|
\\
 & \quad -\gamma_k\left\langle w_k,\nabla_x\ml(x_k,y_{k+1})\right\rangle+L_{xx}\gamma_k^2\left\langle w_k,\nabla_x\ml(z_{k+1},y_{k+1})\right\rangle+\tfrac{L_{xx}\gamma_k^2}{2}\|w_k\|^2.
\end{align}
Define 
$E_k^x\triangleq -\gamma_k\left\langle w_k,\nabla_x\ml(x_k,y_{k+1})\right\rangle+L_{xx}\gamma_k^2\left\langle w_k,\nabla_x\ml(z_{k+1},y_{k+1})\right\rangle.$
Combining  \eqref{align5} and \eqref{align6}:
\begin{align}\label{align3}
\nonumber\ml{(x_{k+1},y_{k+1})}&\leq \ml{(x_k,y_{k+1})}-\gamma_k\big( 1-\tfrac{ L_{xx}\gamma_k}{ 2}\big)\|\nabla_x \ml{(z_{k+1},y_{k+1})}\|^2\\\nonumber&\quad+L_{xx}(1-\alpha_k)\gamma_k\| \zal{\nabla_x}\ml{(z_{k+1},y_{k+1})}\|\|\tilde x_k-x_k\|  +E_k^x+\tfrac{L_{xx}\gamma_k^2}{2}\|w_k\|^2\\
\nonumber&\leq \ml{(x_k,y_{k+1})} -\gamma_k\big( 1-\tfrac{L_{xx}\gamma_k}{ 2}\big)\|\nabla_x \ml{(z_{k+1},y_{k+1})}\|^2\\
\nonumber&\quad+\tfrac{L_{xx}{\gamma_k}^2}{ 2}\|\zal{\nabla_x}\ml{(z_{k+1},y_{k+1})}\|^2+ \tfrac{L_{xx}(1-{\alpha_k})^2}{2}\|\tilde x_k-x_k\|^2+\af{E_k^x+\tfrac{L_{xx}\gamma_k^2}{2}\|w_k\|^2}\\
 \nonumber&=\ml{(x_k,y_{k+1})}-\gamma_k\big(1-{L_{xx}\gamma_k}\big)\|\nabla_x\ml{(z_{k+1},y_{k+1})}\|^2
 \\&\quad+\tfrac{L_{xx}(1-{\alpha_k})^2}{ 2}\|\tilde x_k-x_k\|^2 +E_k^x+\tfrac{L_{xx}\gamma_k^2}{2}\|w_k\|^2,
\end{align}
where we used $ab\leq\tfrac{(a^2+b^2)}{2}$. By steps \zal{\ref{update z}, \ref{update x} and \ref{update tilde x}} of Algorithm \zal{\ref{alg1}} one can obtain 
$\tilde x_{k+1}-x_{k+1}=(1-\alpha_k)\tilde x_k+\alpha_k x_k-\lambda_k(\nabla_x \ml{(z_{k+1},y_{k+1})}+w_k)-[x_k-\gamma_k(\nabla_x \ml{(z_{k+1},y_{k+1})}+w_k)]=(1-\alpha_k)(\tilde x_k-x_k)+(\gamma_k-\lambda_k)(\nabla_x \ml{(z_{k+1},y_{k+1})}+w_k).$
\zal{If we divide both sides of the above equality by $\Gamma_k$, summing over $k$ and using the definition of $\Gamma_k$, we obtain}
$\tilde x_{k+1}-x_{k+1}=\Gamma_k\sum_{\tau=0}^k\left({\gamma_\tau-\lambda_\tau \over \Gamma_\tau}\right)(\nabla_x \ml{(z_{\tau+1},y_{\tau+1})}+\af{w_\tau}).$
Using above equality, the Jensen's inequality, and the fact that
$\sum_{\tau=0}^k {\alpha_\tau \over \Gamma_\tau}={\alpha_0 \over \Gamma_0}+\sum_{\tau=1}^k{1\over  \Gamma_\tau }\left (1-{{\Gamma_\tau}\over{\Gamma_{\tau-1}}}\right)={1 \over \Gamma_0}+\left(\sum_{\tau=1}^k{1\over  \Gamma_\tau }-{1\over  \Gamma_{\tau-1}}\right)={1\over  \Gamma_k},$
we obtain
\begin{align}\label{align9}
\nonumber\| \tilde x_{k+1}-x_{k+1}\|^2&=\left \|\Gamma_k \sum_{\tau=0}^k\left(\tfrac{\gamma_\tau-\lambda_\tau}{\Gamma_\tau}\right)(\nabla_x \ml{(z_{\tau+1},y_{\tau+1})}+\af{w_\tau})\right\|^2\\
\nonumber &=\left\|\Gamma_k \sum_{\tau=0}^k\tfrac{\alpha_\tau}{\Gamma_\tau}\left[\left(\tfrac{\gamma_\tau-\lambda_\tau}{\alpha_\tau}\right)(\nabla_x \ml{(z_{\tau+1},y_{\tau+1})}+\af{w_\tau})\right]\right\|^2\\
\nonumber &\leq\Gamma_k \sum_{\tau=0}^k\tfrac{\alpha_\tau}{\Gamma_\tau}\left \|\left ( \tfrac{\gamma_\tau-\lambda_\tau}{\alpha_\tau}\right )(\nabla_x \ml{(z_{\tau+1},y_{\tau+1})}+\af{w_\tau}) \right \| ^2\\
&=\Gamma_k \sum_{\tau=0}^k\tfrac{(\gamma_\tau-\lambda_\tau)^2}{\Gamma_\tau \alpha_\tau}\|(\nabla_x \ml{(z_{\tau+1},y_{\tau+1})}+\af{w_\tau})\|^2.
\end{align}
Using \eqref{align9} in \eqref{align3}, one can obtain the following, 
\begin{align}\label{sum1}
\nonumber \ml{(x_{k+1},y_{k+1})}&\leq  \ml{(x_{k},y_{k+1})}-\gamma_k(1-L_{xx}\gamma_k)\|\nabla_x\ml{(z_{k+1},y_{k+1})}\|^2\\\nonumber&\quad+\af{E_k^x+\tfrac{L_{xx}\gamma_k^2}{2}\|w_k\|^2}+\tfrac{L_{xx}\Gamma_{\zal{k-1}}(1-\alpha_k)^2}{2}
\times \sum_{\tau=0}^\zal{k-1}\tfrac{(\gamma_\tau-\lambda_\tau)^2}{\Gamma_{\tau}\alpha_{\tau}}\|\nabla_x\ml{(z_{\tau+1},y_{\tau+1})}+w_\tau\|^2\\
\nonumber&\leq \ml{(x_{k},y_{k+1})}-\gamma_k(1-L_{xx}\gamma_k)\|\nabla_x\ml{(z_{k+1},y_{k+1})}\|^2+\af{E_k^x+\tfrac{L_{xx}\gamma_k^2}{2}\|w_k\|^2}\\\nonumber&\quad
+\tfrac{L_{xx}\Gamma_{k}}{ 2}\sum_{\tau=0}^{k}\tfrac{(\gamma_\tau-\lambda_\tau)^2}{\Gamma_{\tau}
\alpha_{\tau}}\|\nabla_x\ml{(z_{\tau+1},y_{\tau+1})}\|^2\\&\quad+\tfrac{L_{xx}\Gamma_{k}}{2}\sum_{\tau=0}^{k}\tfrac{(\gamma_\tau-\lambda_\tau)^2}{\Gamma_{\tau}\alpha_{\tau}}\|w_\tau\|^2+\tfrac{L_{xx}\Gamma_{\zal{k-1}}(1-\alpha_k)^2}{2}\times\af{\sum_{\tau=0}^{k}\tfrac{(\gamma_\tau-\lambda_\tau)^2}{\Gamma_{\tau}\alpha_{\tau}}}w_\tau^T\nabla_x\ml(z_{\tau+1},y_{\tau+1}).
\end{align}
Using Definition \ref{def}, summing both sides of \eqref{sum1} over $k$, and using the definition of $C_k$ in \eqref{align1}, we obtain the following
\begin{align}\label{align11}
\nonumber &\sum_{k=0}^{T-1}\left(\ml{(x_{k+1},y_{k+1})}-\ml{(x_{k},y_{k+1})}\right)\\ \nonumber&\leq-\sum_{k=0}^{T-1}\gamma_k(1-L_{xx}\gamma_k)\|\nabla_x \ml{(z_{k+1},y_{k+1})}\|^2+\sum_{k=0}^{T-1} \tfrac{L_{xx}\Gamma_\mor{{k}}}{2}\sum_{\tau=0}^\mor{{k}}\tfrac{(\gamma_\tau-\lambda_\tau)^2}{\Gamma_{\tau}\alpha_{\tau}}\|\nabla_x \ml{(z_{\zal{\tau+1}},y_{\zal{\tau+1}})}\|^2
\\ \nonumber&\quad+\mb{\sum_{k=0}^{T-1}(\Xi_k+\zeta_k)}\\
 &= \tfrac{L_{xx}}{ 2}\sum_{k=0}^{T-1}\tfrac{(\gamma_k-\lambda_k)^2}{\Gamma_{k}\alpha_{k}}\mor{\left(\sum_{\tau=k}^{T-1}\Gamma_\tau \right)} \|\nabla_x \ml{(z_{k+1},y_{k+1})}\|^2-\sum_{k=0}^{T-1}\gamma_k C_k\|\nabla_x \ml{(z_{k+1},y_{k+1})}\|^2+\mb{\sum_{k=0}^{T-1}(\Xi_k+\zeta_k)}.
\end{align}
From \eqref{align11} and Definition \eqref{def PL}, one can obtain
\begin{align*}
 &\sum_{k=0}^{T-1}(\ml(x_{k+1},y_{k+1})-\ml(x_{k},y_{k+1}))\\ \nonumber&\leq-\sum_{k=0}^{T-1}\gamma_k C_k\mu(\ml{(z_{k+1},y_{k+1})}-\ml(x^\ast\zal{(y_{k+1})},y_{k+1}))
-\sum_{k=0}^{T-1}{\gamma_k C_k\over 2}\|\nabla_x \ml{(z_{k+1},y_{k+1})}\|^2\\ \nonumber&\quad+\mb{{\sum_{k=0}^{T-1}(\Xi_k+\zeta_k)}.}
\end{align*}
Adding $\sum_{k=0}^{T-1}(\ml(x_{k+1},y)-\ml(x_{k},y))$ to both sides:
\begin{align*}
\nonumber &\sum_{k=0}^{T-1}(\ml(x_{k+1},y)-\ml(x_{k},y))\\
\nonumber&\leq
 \sum_{k=0}^{T-1}{(\ml(x_{k+1},y)-\ml(x_{k+1},y_{k+1}))}
+ \sum_{k=0}^{T-1}{(\ml(x_{k},y_{k+1})-\ml(x_{k},y))}\\ \nonumber&\quad- \sum_{k=0}^{T-1}\zal{\gamma_k C_k\mu}{(\ml(z_{k+1},y_{k+1})-\ml(z_{k+1},y)\zal{)}}-\sum_{k=0}^{T-1}\gamma_k C_k\mu (\ml(z_{k+1},y)-\ml(x^*(y_{k+1}),y_{k+1}))\\
\nonumber &\quad-\zal{\sum_{k=0}^{T-1}}{\gamma_k C_k\over 2}\|\nabla_x \ml{(z_{k+1},y_{k+1})}\|^2+\mb{\sum_{k=0}^{T-1}(\Xi_k+\zeta_k)}.
\end{align*}
Using concavity of $\ml$ over $y$, one can obtain
\begin{align}\label{align14}
\nonumber &\sum_{k=0}^{T-1}(\ml(x_{k+1},y)-\ml(x_{k},y))\\
\nonumber&\leq -\sum_{k=0}^{T-1}\gamma_k C_k\mu(\ml{(z_{k+1},y)}-\ml({x^\ast\zal{(y_{k+1})},y_{k+1}})\zal{)}
-\sum_{k=0}^{T-1}{\gamma_k C_k\over 2}\|\nabla_x \ml{(z_{k+1},y_{k+1})}\|^2\\ \nonumber&\quad+\sum_{k=0}^{T-1}[\langle\nabla_y\ml{(x_{k+1},y_{k+1})}+\gamma_kC_k\mu\nabla_y\ml{(z_{k+1},y_{k+1})},y-y_{k+1}\rangle\\&\quad
+\langle
\nabla_y\ml{(x_{k},y_{k+1})},y_{k+1}-y\rangle
]+\mb{\sum_{k=0}^{T-1}(\Xi_k+\zeta_k)}.
\end{align}
Let us define $u_k^1=\nabla_y\ml_{\mathcal V_k}{(x_{k},y_{k})}-\nabla_y\ml{(x_{k},y_{k})}$, $u_k^2=\nabla_y\ml_{\mathcal V_k}{(x_{k-1},y_{k})}-\nabla_y\ml{(x_{k-1},y_{k})}$, $u_k^3=\nabla_y \ml_{\mathcal V_k}{(z_{k+1},y_{k})}-\nabla_y \ml{(z_{k+1},y_{k})}$, define $\bar p_k=\nabla_y \ml{(z_{k+1},y_{k})}$, and $\bar q_k={1\over \beta_k}(\nabla_y\ml{(x_{k},y_{k})}-\nabla_y\ml{(x_{k-1},y_{k})})$. \mor{From optimality condition of step \ref{update y} in Algorithm \ref{alg1},} letting $s_{k}=\bar p_{k}+\bar q_{k}+u_k^1+u_k^2+u_k^3$, one can obtain 
$h(y_{k+1})-\langle  s_{k},y_{k+1}-y\rangle \leq
h(y)+\tfrac{1}{2\sigma_k}[\|y-y_k\|^2-\|y-y_{k+1}\|^2-\|y_{k+1}-y_{k}\|^2].$
Multiplying both sides by $\beta_k=\gamma_kC_k\mu$ and summing over $k$, we obtain,
\begin{align}\label{bound9}
&\sum_{k=0}^{T-1}\beta_k(h(y_{k+1})-\langle  s_{k},y_{k+1}-y\rangle) \leq
\sum_{k=0}^{T-1}\beta_k(h(y)+\tfrac{1}{2\sigma_k}[\|y-y_k\|^2-\|y-y_{k+1}\|^2-\|y_{k+1}-y_{k}\|^2]).
\end{align}
Now, we simplify the inner products involving in \eqref{align14} and \eqref{bound9} using the definition of $\bar p_k$ and $\bar q_k$.
\begin{align}\label{simply}
  \nonumber& \sum_{k=0}^{T-1}\langle\nabla_y\ml{(x_{k+1},y_{k+1})}+\beta_k\nabla_y\ml{(z_{k+1},y_{k+1})},y-y_{k+1}\rangle+\langle
\nabla_y\ml{(x_{k},y_{k+1})},y_{k+1}-y\rangle+\langle  s_{k},y_{k+1}-y\rangle)\\
&=\sum_{k=0}^{T-1}[\beta_{k+1}\langle \bar q_{k+1},y-y_{k+1}\rangle-\beta_k\langle \bar q_k,y-y_k\rangle+\langle \bar q_k, y_{k+1}-y_k\rangle].
\end{align}
\mor{\zal{Moreover, using Young's inequality, and step \ref{update x} in Algorithm \ref{alg1}}, one can obtain}
\begin{align}\label{align15}
&\langle \bar q_k, y_{k+1}-y_k\rangle\leq \tfrac{L_{xy}}{ \beta_k}\|x_k-x_{k-1}\|\|y_{k+1}-y_k\|\leq \tfrac{L_{xy}^2\gamma_{k-1}^2}{2\beta_k\tau_k}\|\nabla_x\ml{(z_k,y_k)}\|^2+\tfrac{\tau_k}{2}\|y_{k+1}-y_k\|^2.
\end{align}
Summing \eqref{bound9} and \eqref{align14}, using \eqref{align15} and \eqref{simply}, we get,
\begin{align}\label{align22}
&\nonumber \sum_{k=0}^{T-1}(\ml(x_{k+1},y)-\ml(x_{k},y))+ \sum_{k=0}^{T-1}\beta_k(\ml(z_{k+1},y)-\ml(x^\ast\zal{(y_{k+1})},y_{k+1}))+\sum_{k=0}^{T-1}\beta_k(h(y_{k+1})-h(y))\\ \nonumber
&\leq \sum_{k=0}^{T-1}{(\tfrac{L_{xy}^2\gamma_{k-1}^2}{2{\beta_k}\tau_k}-\tfrac{\gamma_k C_k}{2})}\|\nabla_x\ml{(z_k,y_k)}\|^2+\sum_{k=0}^{T-1}[\beta_{k+1}\langle \bar q_{k+1},y-y_{k+1}\rangle-\beta_k\langle \bar q_k,y-y_k\rangle]\\
\nonumber& \quad+ \sum_{k=0}^{T-1}\tfrac{\beta_k}{ 2\sigma_k}[\|y-y_k\|^2-\|y-y_{k+1}\|^2]+\sum_{k=0}^{T-1}{(\tfrac{\az{\tau_k}}{2}-\tfrac{\beta_k}{{2\sigma_k}})}\|y_{k+1}-y_k\|^2+\tfrac{L_{xy}^2\gamma_0^2}{ 2\beta_0\tau_0}\|\nabla_x\ml{(z_0,y_0)}\|^2\\
&\quad-\tfrac{\gamma_{T-1}C_{T-1}}{ 2}\|\nabla_x \ml (z_{T},y_T)\|^2+\af{\sum_{k=0}^{T-1}\langle\beta_ku_k^3+u_k^1-u_k^2,y_{k+1}-y\rangle}+\mb{\sum_{k=0}^{T-1}(\Xi_k+\zeta_k)},
\end{align}
where $\gamma_{-1} = \gamma_0$. From Cauchy-Schwartz inequality, using Lemma \ref{lem:error} where we choose $v_0=y_0$, and defining $U_k\triangleq\langle \beta_ku_k^3+u_k^1-u_k^2,y_k+v_k\rangle$, the following holds
\begin{align}\label{2align22}
  \nonumber &\langle\beta_ku_k^3+u_k^1-u_k^2,y_{k+1}-y\pm y_k\rangle\leq \tfrac{1}{ 2\bar \alpha_k}\|\beta_ku_k^3+u_k^1-u_k^2\|^2+\tfrac{\bar \alpha_k}{ 2}\|y_{k+1}-y_k\|^2\\ \nonumber&\quad +\langle \beta_ku_k^3+u_k^1-u_k^2,y_{k}-y\pm v_k\rangle\\
   &\leq \tfrac{1}{ \bar \alpha_k}\|\beta_ku_k^3+u_k^1-u_k^2\|^2+\tfrac{\bar \alpha_k}{ 2}\|y_{k+1}-y_k\|^2+\tfrac{\bar \alpha_k}{ 2}\|y-v_k\|^2-\tfrac{\bar \alpha_k}{ 2}\|y-v_{k+1}\|^2+U_k,
\end{align}
for some $\bar \alpha_k\geq 0$. Hence, using \eqref{2align22} in \eqref{align22} and rearranging terms, one can obtain the following, 
\begin{align}\label{3align23}
  \nonumber &\quad -\sum_{k=0}^{T-1}\underbrace{(\tfrac{L_{xy}^2{\gamma^2_{k-1}}}{2{\beta_k}\tau_k}-\tfrac{\gamma_k C_k}{2})}_{\text{term (A)}}\|\nabla_x\ml{(z_k,y_k)}\|^2-\sum_{k=0}^{T-1}\underbrace{(\tfrac{\az{\tau_k}}{2}-\tfrac{\beta_k}{{2\sigma_k}}+\tfrac{\bar \alpha_k}{ 2})}_{\text{term (B)}}\|y_{k+1}-y_k\|^2\\
\nonumber&\leq -\sum_{k=0}^{T-1}(\ml(x_{k+1},y)-\ml(x_{k},y))- \sum_{k=0}^{T-1}\beta_k(\ml(z_{k+1},y)-\ml(x^\ast\zal{(y_{k+1})},y_{k+1}))- \sum_{k=0}^{T-1}\beta_k(h(y_{k+1})-h(y))\\
\nonumber&\quad +\sum_{k=0}^{T-1}[\beta_{k+1}\langle \bar q_{k+1},y-y_{k+1}\rangle-\beta_k\langle \bar q_k,y-y_k\rangle] +\sum_{k=0}^{T-1}\tfrac{\beta_k}{ 2\sigma_k}[\|y-y_k\|^2-\|y-y_{k+1}\|^2\\
\nonumber&\quad+\tfrac{1}{ 2}\|y-v_k\|^2-\frac{1}{2}\|y-v_{k+1}\|^2]+\tfrac{L_{xy}^2\gamma_0^2}{ 2\beta_0\tau_0}\|\nabla_x\ml{(z_0,y_0)}\|^2-\tfrac{\gamma_{T-1}C_{T-1}}{ 2}\|\nabla_x \ml (z_{T},y_T)\|^2\\
&\quad+\af{\sum_{k=0}^{T-1}\tfrac{1}{ \bar\alpha_k}\|\beta_ku_k^3+u_k^1-u_k^2\|^2}+\mb{\sum_{k=0}^{T-1}(\Xi_k+\zeta_k+U_k)}.
\end{align}
Choosing the parameters such that \az{$\sigma_k \leq\tfrac{\mu^2}{216L^2_{xy}}$}, $\alpha_k=\tfrac{2}{k+1}$, $\lambda_k=\tfrac{1}{2L_{xx}}$, $\tau_k=\tfrac{9L^2_{xy}}{\mu}$ , $\bar\alpha_k=\tfrac{\beta_k}{4\sigma_k}$, and $\gamma_k\in [\lambda_k,(1+\alpha_k/4)\lambda_k]$ for any $k\geq 0$, one can show that in \eqref{3align23} Term (A)$\leq{-{\gamma_k C_k}\over4}$ and Term (B)$\leq {-\beta_k\over{4\sigma_k}}$. Therefore, choosing $k^*=\mbox{argmin}\{\|\nabla \ml (z_{k},y_{k})\|^2+\|y_{k+1}-y_{k}\|^2\}$, the left hand side (LHS) of \eqref{3align23} can be bounded from below by $\big(\sum_{k=0}^{T-1} \min \{{\tfrac{{\gamma_k C_k}}{4}, \tfrac{\beta_k}{{4\sigma_k}}}\}\big)\big(\|\nabla_x \ml(z_{k^*}),y_{k^*})\|^2+\|y_{k^*+1}-y_{k^*}\|^2\big)$.
Moreover, letting $(x^*,y^*)$ to be an arbitrary saddle point solution of \eqref{main}, choosing $y=y^*$, using the fact that $\ml(x^\ast{(y_{k+1})},y_{k+1}) \leq \ml(x^*,y_{k+1})$ and \eqref{align15}, one can obtain:
\begin{align}
  \nonumber\|\nabla_x \ml(z_{k^*},y_{k^*})\|^2+\|y_{k^*+1}-y_{k^*}\|^2&\leq \nonumber \tfrac{1}{TD}\biggr[ \ml(x_0,y^*)-\ml(x_T,y^*) +\tfrac{3\beta_0}{4\sigma_0}\|y^*-y_0\|^2\\\nonumber
  &\quad + \tfrac{{L^2_{xy}}\gamma_0^2}{2\beta_0\tau_0}\|\nabla_x \ml(z_0,y_0))\|^2\\\nonumber
  &\quad+\sum_{k=0}^{T-1}\tfrac{1}{\bar \alpha_k} \underbrace{\|\beta_k u_k^3+u_k^1-u_k^2\|^2}_{\text{term (C)}}+\mb{\sum_{k=0}^{T-1}(\Xi_k+\zeta_k+U_k)}\biggr],
\end{align}
where $D\triangleq {\min \{{\tfrac{{\gamma_k C_k}}{4}, \tfrac{\beta_k}{{4\sigma_k}}}}\}$ and we used ${\sum_{k=0}^{T-1}D}=TD$. 
\end{proof}
Now, we are ready to prove Theorem \ref{th1} and establish the convergence rate results.

{\bf Proof of Theorem \ref{th1}.} From \eqref{lemma_result}, we have that
\begin{align}
  \nonumber\|\nabla_x \ml(z_{k^*},y_{k^*})\|^2+\|y_{k^*+1}-y_{k^*}\|^2&\leq \nonumber \tfrac{1}{TD}\biggr[ \ml(x_0,y^*)-\ml(x_T,y^*) +\tfrac{3\beta_0}{4\sigma_0}\|y^*-y_0\|^2\\\nonumber&\quad + \sum_{k=0}^{T-1}\biggr(\tfrac{1}{\bar \alpha_k} \underbrace{\|\beta_k u_k^3+u_k^1-u_k^2\|^2}_{\text{term (C)}}+\Xi_k+\zeta_k+U_k\biggr)\biggr]  \nonumber \\
  &\quad + \tfrac{{L^2_{xy}}\gamma_0^2}{2\beta_0\tau_0}\|\nabla_x \ml(z_0,y_0))\|^2
\end{align}
{ Taking conditional expectation, one can show that $\mathbb E[C \mid \mathcal H_k ]\leq \tfrac{9\nu^2_y}{T}, \mathbb E[\Xi_k \mid \mathcal F_k]=\tfrac{L_{xx} \gamma^2_k \nu^2_x}{2T}$, $\mathbb E[\zeta_k \mid \mathcal F_k]\leq \tfrac{L_{xx} \lambda^2_k \nu^2_x}{32T},$ and $\mathbb E[U_k \mid \mathcal H_k ]=0$. Hence, we obtain:
\begin{align}\label{last inequal}
    \nonumber\mathbb E \biggr[ \|\nabla_x \ml(z_{k^*},y_{k^*})\|^2+\|y_{k^*+1}-y_{k^*}
    \|^2\biggr]&\leq 
  \tfrac{1}{TD} \biggr[ \ml(x_0,y^*)-\ml(x_T,y^*) +\tfrac{3\beta_0}{4\sigma_0}\|y^*-y_0\|^2 \nonumber \\
  & \quad + \tfrac{{L^2_{xy}}\gamma_0^2}{2\beta_0\tau_0}\|\nabla_x \ml(z_0,y_0))\|^2 \nonumber \\&\quad +\sum_{k=0}^{T-1}\left( \tfrac{9\nu^2_y}{\bar \alpha_k T}+\tfrac{L_{xx}, \gamma^2_k \nu^2_x}{2T}+\tfrac{L_{xx} \lambda^2_k \nu^2_x}{32T}\right)\biggr]\leq \mathcal O(1/T).
\end{align}
Moreover, from the steps of Algorithm \ref{alg1}, $\|z_{k+1}-z_k\|\leq \lambda_{k-1}\|r_{k-1}\|+\|x_k-\tilde x_k\|$. Using steps \ref{update x} and \ref{update tilde x}, one can show that $\|x_k-\tilde x_k\|\leq \mathcal O(\tfrac{1}{(T+1)\sqrt T})$. Hence $\|z_{k+1}-z_k\|\leq \mathcal O(\sqrt\epsilon)$. Invoking Lemma \ref{epsilon stationary}, we conclude that $(z_{k^*},y_{k^*})$ is an $\epsilon$-stationary point of problem \eqref{main}.} 
To achieve an $\epsilon$-stationary point, we let the rhs of \eqref{last inequal} equal to $\epsilon^2$ which implies that $T=\mathcal O(\epsilon^{-2})$. Hence, total number of sample gradient evaluations is $\sum_{k=0}^{T-1}b=T^2=\mathcal O(\epsilon^{-4})$, since we chose $b=T$.   \qed
\bibliographystyle{alpha}
\bibliography{IEEEexample}
\end{document}